\newcommand{\B}{\mathbf{B}}		
\newcommand{\id}{\operatorname{id}}
\newcommand{\Tan}{\operatorname{Tan}}
\renewcommand{\le}{\leqslant}
\renewcommand{\ge}{\geqslant}
\newcommand{\cH}{\mathcal{H}}
\newcommand{\cN}{{\ensuremath{\mathcal N}}}
\newcommand{\Sph}{{\ensuremath{\mathbb S}}}
\newcommand{\R}{{\ensuremath{\mathbb R}}}
\newcommand{\sing}{\operatorname{sing}}
\newcommand{\dd}{\phantom{.}\mathrm{d}}
\newcommand{\loc}{\mathrm{loc}}
\definecolor{czerwony}{rgb}{0.9, 0.2, 0.1}
\def\XXint#1#2#3{{\setbox0=\hbox{$#1{#2#3}{\int}$}
     \vcenter{\hbox{$#2#3$}}\kern-.5\wd0}}
\numberwithin{equation}{section}
\newtheorem{thm}{Theorem}[section]
\newtheorem{lem}[thm]{Lemma}
\newtheorem{cor}[thm]{Corollary}
\newtheorem{prop}[thm]{Proposition}
\theoremstyle{definition}
\newtheorem{rem}[thm]{Remark}
\newtheorem{ex}[thm]{Example}
\newtheorem{df}[thm]{Definition}
\author[M. Miśkiewicz]{Michał Miśkiewicz}
\title[Regularity of the singular set]{On H{\"o}lder regularity of the singular set of energy minimizing harmonic maps into closed manifolds}
\address{Institute of Mathematics, University of Warsaw,\newline Banacha 2, 02-097 Warszawa, Poland}
\email{m.miskiewicz@mimuw.edu.pl}
\thanks{The research has been supported by the NCN grant no. 2016/21/B/ST1/03138 (years 2017-2020).}
\subjclass[2010]{35J60, 53C43, 58E20}
\keywords{harmonic maps, singular set}
\begin{document}

\begin{abstract}
Energy minimizing harmonic maps between manifolds are known to be smooth outside a rectifiable set of codimension 3, called the singular set. The possibility that this set is not a manifold, but has arbitrarily many small gaps in it, is not excluded in general. Here we prove that some part of the singular set -- characterized by topological and analytic properties of tangent maps -- is a topological manifold. In the special case of maps into the sphere ${\mathbb S}^2$, we conclude that the whole top-dimensional part of the singular set is a manifold -- this generalizes a similar result in two-dimensional domain, due to Hardt and Lin. 
\end{abstract}

\maketitle

\section{Introduction}

\subsection*{Singularities of energy minimizing harmonic maps}

Energy minimizing harmonic maps between manifolds may have singularities if the domain dimension is $3$ or higher. The most well-known example is the map 
\[
\R^3 \times \R^{n-3} \ni (x,y) \mapsto x/|x| \in \Sph^2. 
\]
In general, any energy minimizer $u$ is smooth outside the closed singular set $\sing u$ of Hausdorff dimension $n-3$ or less, $n$ being the dimension of the domain (Schoen, Uhlenbeck \cite{SchUhl82,SchUhl83cor}). The phenomenon of singularities is now well-understood in dimension $3$, when singularities form a discrete set. In recent years, there has been a~substantial progress concerning the case $n \ge 4$. Naber and Valtorta \cite{NabVal17} have proved that the singular set has locally finite $(n-3)$-dimensional Hausdorff measure and is $(n-3)$-rectifiable, i.e., can be essentially covered by countably many Lipschitz images of $\R^{n-3}$; the latter was already known (due to Simon \cite{Sim95}) in the case when the target manifold is real-analytic. 

The results cited above are mostly concerned with the size of the singular set, but do not imply \textit{lower bounds} on the singular set. In particular, the possibility that the singular set is an arbitrary subset of an $(n-3)$-dimensional manifold (with many small gaps) is not excluded by \cite{NabVal17,SchUhl82,SchUhl83cor,Sim95}. 

Lower bounds on the size are indeed possible in the presence of a~topological obstruction; the following example is simple but instructive. 

\begin{ex}
Consider the smooth boundary map $\varphi \colon \Sph^2 \times \Sph^1 \to \Sph^2$ given by $\varphi(x,y) = x$ and (some) $u \colon \B^3 \times \Sph^1 \to \Sph^2$ minimizing the energy in the class of maps equal to $\varphi$ on the boundary. Restricting $u$ to a slice $\B^3 \times \{ y \}$ and applying Brouwer's theorem, we see that each such slice contains a singular point. This shows that $\cH^1(\sing u) \ge \cH^1(\Sph^1) = 2 \pi$. In this particular case one can actually prove that $u(x,y)=x/|x|$, but the presented reasoning applies also to any $\varphi$ in the same homotopy class. 
\end{ex}

In the special case of maps $u \colon \B^4 \to \Sph^2$, Hardt and Lin \cite{HarLin90} obtained the following remarkable result. 
\begin{thm}
\label{thm:HarLin90}
The singular set of an energy minimizer $u \colon \B^4 \to \Sph^2$ is locally a~union of a~finite set and a~finite family of H{\"o}lder continuous closed curves with a~finite number of crossings. 
\end{thm}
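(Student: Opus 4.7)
The plan is to work locally around each $x_0 \in \sing u$ and separate singular points by the symmetry type of their tangent maps. At any $x_0 \in \sing u$, a tangent map $\varphi$ is a $0$-homogeneous energy-minimizing map $\R^4 \to \Sph^2$; let $V_\varphi \subset \R^4$ be the subspace of translation symmetries of $\varphi$ and set $k_\varphi = \dim V_\varphi$. Quotienting produces a $0$-homogeneous minimizer $\R^{4-k_\varphi} \to \Sph^2$, and since energy-minimizing maps from $\R^2$ into $\Sph^2$ are smooth (Morrey), one must have $k_\varphi \le 1$. Accordingly I would partition $\sing u = \Sigma_0 \cup \Sigma_1$, where $\Sigma_0$ collects $x_0$ all of whose tangent maps have only the origin as singularity, and $\Sigma_1 = \sing u \setminus \Sigma_0$.

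First I would dispose of $\Sigma_0$. At $x_0 \in \Sigma_0$, smooth convergence of rescalings $u_{x_0,r} \to \varphi$ on the link $\Sph^3$ combined with $\epsilon$-regularity yields $r_0 = r_0(x_0) > 0$ with $B_{r_0}(x_0) \cap \sing u = \{x_0\}$, so $\Sigma_0$ is discrete. A definite energy drop at each such point, combined with the monotonicity formula and the global energy bound, then gives local finiteness of $\Sigma_0$.

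For $x_0 \in \Sigma_1$, a tangent map $\varphi_{x_0}$ has translational symmetry along some line $L = L_{x_0} \subset \R^4$, and its restriction to $L^\perp \cong \R^3$ is a $0$-homogeneous minimizer with isolated singularity at $0$. By $\epsilon$-regularity and smooth convergence on cones avoiding $L$, there exists $r_1 = r_1(x_0) > 0$ such that $\sing u \cap B_{r_1}(x_0)$ lies in an arbitrarily thin conical neighborhood of $x_0 + L$. To upgrade this positional inclusion to a H{\"o}lder curve I would invoke a Simon-type {\L}ojasiewicz inequality providing (a) uniqueness of $\varphi_{x_0}$ at every $x_0 \in \Sigma_1$ and (b) a polynomial decay rate of the form $r^{-4}\int_{B_r(x_0)} |u - \varphi_{x_0}|^2 \le C r^{2\alpha}$. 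Comparing rescalings at two nearby points $x_0, x_1 \in \Sigma_1$ at scale $r \sim |x_0 - x_1|$ then forces the axes $L_{x_0}, L_{x_1}$ to differ (as lines through the respective points) by at most $C|x_0 - x_1|^{\alpha}$; combined with the thin-tube inclusion, this shows that $\Sigma_1$ is locally a finite union of H{\"o}lder graphs over its tangent lines. Crossings lie either in $\Sigma_0$ or at endpoints where two branches of $\Sigma_1$ meet, and both types are controlled by the energy-drop argument from the previous step.

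The main obstacle is verifying the {\L}ojasiewicz inequality in this specific setting. One must check that the family of axially symmetric tangent maps $\R^4 \to \Sph^2$, after modding out translations along $L$ and rotations fixing $L$, is \emph{integrable} on the link $\Sph^3$ in Simon's sense. Since $\Sph^2$-valued tangent maps on $\R^3$ are, up to rotation, the single map $x \mapsto x/|x|$, the moduli space here is finite-dimensional and essentially explicit, which makes integrability plausible; but carrying this through rigorously -- together with dealing with the noncompact parameter (the axis $L$) and the rotational redundancy -- is the technical heart of the proof.
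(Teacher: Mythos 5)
Your proposal takes the \L ojasiewicz--Simon / integrability route (Simon's approach from \cite{Sim95,Sim96}), which is genuinely different from both Hardt--Lin's original argument and from the strategy of the present paper. The \L ojasiewicz machinery, if carried through, gives uniqueness of the tangent map at each $x_0 \in \Sigma_1$, a decay rate for $r^{-4}\int_{\B_r(x_0)}|u-\varphi_{x_0}|^2$, and H\"older variation of the axes $L_{x_0}$. All of that is real and useful -- but it only yields \emph{containment}: $\Sigma_1$ lies in a H\"older curve and its (approximate) tangent lines vary H\"older continuously. It does not show that $\Sigma_1$ \emph{fills} the curve, i.e. that the singular set has no gaps. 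A closed subset of a $C^{0,\gamma}$ arc with H\"older-continuously varying tangent lines can still be a Cantor-like set. The present paper is explicit about this limitation: it says that the possibility of the singular set being ``an arbitrary subset of an $(n-3)$-dimensional manifold (with many small gaps)'' is not excluded by \cite{Sim95} or \cite{NabVal17}. Producing the topological manifold structure in Theorem~\ref{thm:HarLin90} therefore requires an extra, topological ingredient that your outline does not supply.

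That missing ingredient is a degree / homotopy argument: near $x_0\in\Sigma_1$, slice $\B_{r_1}(x_0)$ by the $3$-planes $y+L_{x_0}^\perp$; since the thin-tube inclusion and uniform convergence away from the axis force $u$ restricted to a small $2$-sphere in each slice to be homotopically nontrivial (degree $\pm 1$ by Brezis--Coron--Lieb), Brouwer's theorem forces each slice to meet $\sing u$. This is exactly what Hardt--Lin use, and what the present paper abstracts into the indecomposability of a homotopy class together with Lemma~\ref{lem:no-drop}; combined with a Reifenberg iteration (Theorem~\ref{thm:Reifenberg}) it converts the tube containment into an honest bi-H\"older parametrization. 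Without this ``no gap'' step your proof does not close, even granting the integrability of the moduli space of cylindrical tangent maps (which for $\cN=\Sph^2$ is plausible and essentially Simon's hypothesis, but is \emph{not} the bottleneck). Two smaller remarks: local finiteness of $\Sigma_0$ needs to rule out accumulation of $\Sigma_0$-points at a point of $\Sigma_1$, which again uses the $\Sigma_1$-analysis rather than just an energy drop; and the treatment of crossings as ``endpoints where two branches of $\Sigma_1$ meet'' is too vague to substitute for Hardt--Lin's explicit counting argument at the $\Sigma_0$-stratum.
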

The same claim was obtained also for maps $u \colon \B^5 \to \Sph^3$ (Lin-Wang \cite{LinWan06}). To the author's knowledge, these are the only two cases where $\sing u$ was shown to be essentially a~manifold.  

The above theorem relies on the classification of tangent maps from $\R^3$ into $\Sph^2$ carried out by Brezis, Coron and Lieb \cite{BreCorLie86}; for $\Sph^3$, a similar classification was obtained by Nakajima \cite{Nak06}. These maps describe the infinitesimal behavior of $u$ at a typical point of $\sing u$. 

\subsection*{Main results}

The present paper aims to extract the topological obstruction responsible for preventing gaps in the singular set of maps into $\Sph^2$. To this end, we distinguish particular homotopy classes of tangent maps $\R^3 \to \cN$ (called here \textit{indecomposable classes}) for any closed Riemannian manifold $\cN$. 

To each homotopy class $\alpha \in \pi_2(\cN)$ we assign its lowest energy level $\Theta(\alpha)$ and call $\alpha$ indecomposable if $\Theta(\alpha) < \infty$ and $\alpha$ cannot be represented as a sum of homotopy classes $\alpha_j \in \pi_2(\cN)$ with strictly smaller energy levels $\Theta(\alpha_j)$. We then restrict our attention to singularites with fixed topological type $\alpha$ -- we define $\sing_\alpha u$ to be the set of points at which some tangent map of $u$ has type $\alpha$. Rigorous definitions are given in Section~\ref{ch:HCMs}. 

Another goal is to generalize the result of Hardt and Lin \cite{HarLin90} to higher dimensional domains. The difficulty lies in the fact that the singular set is stratified -- it decomposes into parts of different dimensions. For $u \colon \B^4 \to \Sph^2$, there are only two strata: one is formed by H{\"o}lder continuous curves and the other by their crossing points and some isolated points. In the theorem below, we were only able to study the top-dimensional part $\sing_* u$ of the singular set. Again, the necessary notions are introduced in Section \ref{ch:tangent-maps}. 

For simplicity, we only consider the standard Euclidean ball $\B^n$ as the domain, but the results hold true for any manifold. This is due to the fact that we only consider the infinitesimal behavior of maps. A detailed explanation can be found in \cite{NabVal17} and \cite[Sec.~8]{Sim95}. 

\begin{thm}
\label{thm:Holder-regularity}
Let $u \colon \B^n \to \cN$ be an energy minimizing map into a closed Riemannian manifold $\cN$, $\alpha \in \pi_2(\cN)$ be an indecomposable homotopy class, and $\Theta(\alpha)$ be its lowest energy level. Then for each exponent $0 < \gamma < 1$ there is $\delta(\gamma,n,\alpha,\cN) > 0$ such that the set 
\[
\left \{
x \in \sing_\alpha u : 
\lim_{r \to 0} r^{2-n} \int_{\B_r(x)} |\nabla u|^2 < \Theta(\alpha) + \delta
\right \}
\]
forms an open subset of $\sing u$ and it is a topological $(n-3)$-dimensional manifold of H{\"o}lder class $C^{0,\gamma}$. 
\end{thm}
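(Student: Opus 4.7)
Both conclusions --- openness of the set in $\sing u$ and its $C^{0,\gamma}$ manifold structure --- hinge on a single quantitative principle: when the density at $x$ at scale $r$ lies within $\delta$ of $\Theta(\alpha)$, the monotonicity formula together with compactness of bounded-energy minimizers force the rescaled map $u(x+r\,\cdot)$ to be uniformly close to some $0$-homogeneous tangent map of type $\alpha$. Indecomposability is what excludes competing approximants: any limit whose class decomposed $\alpha = \alpha_1+\ldots+\alpha_k$ with strictly smaller energies $\Theta(\alpha_j)$ is forbidden by assumption, so the approximating family collapses onto the moduli of energy-minimizing maps $\Sph^2 \to \cN$ in class $\alpha$.

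\textbf{Openness.} Take $x_0$ in the set and $x \in \sing u$ close to $x_0$. Simon's upper semicontinuity of the density immediately yields $\Theta(u,x) < \Theta(\alpha)+\delta$. To produce a tangent map of type $\alpha$ at $x$, I plan to choose a generic small radius $r$ for which $\partial\B_r(x)$ avoids $\sing u$ and $u|_{\partial\B_r(x)}$ is close in energy to a lowest-energy $\psi_\alpha\colon \Sph^2 \to \cN$ (by the principle above); its homotopy class is then forced to be $\alpha$, because any other class $\beta$ representable at energy below $\Theta(\alpha)+\delta$ would combine with the class at a slightly larger scale centred at $x_0$ to exhibit a non-trivial bubble decomposition of $\alpha$ into pieces of strictly smaller energy, contradicting indecomposability. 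Letting $r \to 0$ produces a tangent map of type $\alpha$ at $x$, placing $x$ in our set.

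\textbf{Manifold structure and Hölder rate.} On the open set $E$ so obtained I aim to verify, at each $x \in E$ and each small $r$, two quantitative bounds:
\begin{enumerate}
\item[(i)] $E \cap \B_r(x)$ lies in the $C r^{1+\gamma}$-neighbourhood of an affine $(n-3)$-plane $V_{x,r}$, defined as the spine of the closest $\alpha$-type $0$-homogeneous model at $(x,r)$;
\item[(ii)] $\angle(V_{x,r}, V_{x,r/2}) \le C r^\gamma$.
\end{enumerate}
Fed into the Hölder Reifenberg parametrisation theorem, these identify $E$ locally with the graph of a $C^{0,\gamma}$ function over $\R^{n-3}$. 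Both bounds reduce to a polynomial excess-decay estimate
\[
E_\alpha(x,r) := r^{2-n}\int_{\B_r(x)}|\nabla u|^2 - \Theta(\alpha) \le C\,r^{2\gamma},
\]
which I intend to derive by an iterated improvement-of-flatness scheme: from closeness to the $\alpha$-model at scale $r$, linearise at that model, use indecomposability to preclude splitting of the defect into pieces carrying cheaper homotopy, and estimate what remains by the $L^2$-theory for the Jacobi operator, yielding $E_\alpha(x,r/2) \le \lambda\, E_\alpha(x,r)$ with $\lambda = \lambda(\delta) \to 0$ as $\delta \to 0$.

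\textbf{Main obstacle.} The substantive difficulty is this excess-decay step together with the fact that $\gamma$ is allowed to be arbitrarily close to $1$. Indecomposability blocks the most tempting competitor --- a splitting of the singularity into cheaper pieces --- but does not address the possible drift of the approximating tangent map along a moduli space of equal-energy minimizers $\psi_\alpha$, nor along the $(n-3)$-parameter gauge of spine translations. Controlling this drift is a relative of Simon's uniqueness-of-tangent-maps problem and would typically demand either a Lojasiewicz-type inequality for the Dirichlet energy of $\Sph^2 \to \cN$ near its minimizers in class $\alpha$, or a direct geometric stabilisation along the spine. This is where essentially all the technical weight of the proof is expected to lie.
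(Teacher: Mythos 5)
Your overall strategy (compactness near energy level $\Theta(\alpha)$ plus indecomposability to block splitting, then Reifenberg) matches the paper, but the manifold-structure step goes down a route the paper deliberately avoids, and it leads to a genuine gap.

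You propose to verify, at each centre and scale, that $E$ lies within $Cr^{1+\gamma}$ of a plane and that the plane's tilt decays like $r^{\gamma}$, and you reduce both to the polynomial excess decay $\theta_u(x,r)-\Theta(\alpha)\le Cr^{2\gamma}$. Those are \emph{summable} flatness/tilt bounds, i.e.\ the hypotheses of the vanishing-constant Reifenberg theorem, whose conclusion is $C^{1,\gamma}$. To obtain them you would need exactly what you flag as the ``main obstacle'': a \L{}ojasiewicz-type inequality near the moduli of $\alpha$-minimizers, or uniqueness of tangent maps. For a general closed target $\cN$ this is unavailable (Simon has it only under real-analyticity or an integrability hypothesis), so as written the iteration cannot close. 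But this machinery is not needed for the stated $C^{0,\gamma}$ conclusion. The paper invokes the \emph{fixed-constant} Reifenberg disc theorem (Theorem~\ref{thm:Reifenberg}): one only needs $\varepsilon$-Reifenberg flatness with a single $\varepsilon=\varepsilon(n,\gamma)$ at every scale, with no decay. That weaker flatness is extracted by pure compactness/contradiction (Lemma~\ref{lem:fund}), propagated to all concentric scales by re-applying the same compactness lemma (Corollary~\ref{cor:all-scales}), and propagated to nearby centres via the translation/almost-invariance of the limiting HCM (Proposition~\ref{prop:all-balls}). The arbitrariness of $\gamma<1$ is then free: shrink $\varepsilon$, which only forces $\delta$ smaller, never an excess-decay rate. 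Dropping items (i)--(ii) and the Jacobi-operator linearisation, and replacing them with the soft compactness scheme, is the essential correction.

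A second, smaller gap is in the openness and no-gap step. You argue that a competing class $\beta$ at a nearby centre would combine with $\alpha$ at a larger scale into a cheap bubble decomposition, but you do not say how to make the decomposition rigorous. The paper does this in Lemma~\ref{lem:no-drop} by slicing with transverse $3$-planes: one needs the Naber--Valtorta $\cH^{n-3}$ local finiteness and rectifiability of $\sing u$, Eilenberg's coarea inequality to get a generic slice meeting $\sing u$ in finitely many points with transverse approximate tangent planes, and then reads off $\alpha=[\varphi_1]+\cdots+[\varphi_k]$ from uniform convergence to the tangent HCMs near those points. Without these ingredients the ``bubble decomposition across scales'' heuristic is not a proof. (The paper notes that when $\Theta(\alpha)$ is the globally minimal positive density this extra machinery can be bypassed, but not for a general indecomposable $\alpha$.)
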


In the case when $\cN$ is a real-analytic manifold, Simon \cite[Lemma~4.3]{Sim95} showed that the set of possible energy densities $\lim_{r \to 0} r^{2-n} \int_{\B_r(x)} |\nabla u|^2$ is discrete. This allows us to slightly strenghten the statement above. The same conclusion holds also if $\cN$ satisfies the integrability assumption introduced in \cite[Ch.~3.13]{Sim96}. 

\begin{cor}
\label{cor:discrete-levels}
If $u \colon \B^n \to \cN$ is an energy minimizing map into a real-analytic manifold $\cN$ and $\alpha \in \pi_2(\cN)$ is an indecomposable homotopy class, then 
\[
\left \{
x \in \sing_\alpha u : 
\lim_{r \to 0} r^{2-n} \int_{\B_r(x)} |\nabla u|^2 = \Theta(\alpha) 
\right \}
\]
forms an open subset of $\sing u$ and it is a topological $(n-3)$-dimensional manifold of H{\"o}lder class $C^{0,\gamma}$ with any $0 < \gamma < 1$. 
\end{cor}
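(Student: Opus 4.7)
The plan is to deduce the corollary from Theorem~\ref{thm:Holder-regularity} by invoking real-analyticity to eliminate the quantitative gap $\delta$ from the statement. The essential observation is that the set displayed in the corollary, which I will call $B$, coincides with the set appearing in Theorem~\ref{thm:Holder-regularity} as soon as $\delta$ is taken smaller than the spacing between consecutive admissible density values at singular points.

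The first step is to verify that every $x \in \sing_\alpha u$ satisfies
\[
\Theta(x) \;:=\; \lim_{r \to 0} r^{2-n} \int_{\B_r(x)} |\nabla u|^2 \;\ge\; \Theta(\alpha).
\]
Indeed, by definition of $\sing_\alpha u$ some tangent map $\varphi$ of $u$ at $x$ has homotopy type $\alpha$, and its appropriately normalized Dirichlet energy coincides with $\Theta(x)$ by the monotonicity formula and passage to the limit; on the other hand, $\Theta(\alpha)$ is by definition the infimum of such energies over all maps of type $\alpha$. Next, Simon's Lemma~4.3 in \cite{Sim95}, applicable since $\cN$ is real-analytic, asserts that the set of densities $\Theta(x)$ with $x \in \sing u$ is discrete, so there exists $\delta_0 > 0$ such that the interval $(\Theta(\alpha), \Theta(\alpha)+\delta_0)$ contains no admissible value.

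Now fix $\gamma \in (0,1)$ and let $\delta = \delta(\gamma,n,\alpha,\cN) > 0$ be the constant produced by Theorem~\ref{thm:Holder-regularity}; write $A_\gamma = \{x \in \sing_\alpha u : \Theta(x) < \Theta(\alpha)+\delta\}$ for the corresponding set. Since $\Theta$ is upper semi-continuous on $\sing u$ (a standard consequence of the monotonicity formula), the set $U = \{x \in \sing u : \Theta(x) < \Theta(\alpha)+\delta_0\}$ is open in $\sing u$, and the previous paragraph together with the gap $\delta_0$ forces
\[
B \;=\; A_\gamma \cap U.
\]
As $A_\gamma$ is open in $\sing u$ by Theorem~\ref{thm:Holder-regularity} and $U$ is open in $\sing u$ as well, so is $B$; as an open subset of the $C^{0,\gamma}$ topological $(n-3)$-manifold $A_\gamma$, it inherits the same manifold structure and regularity. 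Since $B$ itself does not depend on $\gamma$, running this argument for every $\gamma \in (0,1)$ equips the single set $B$ with a $C^{0,\gamma}$ manifold structure for each such $\gamma$. No serious obstacle is expected here: once Theorem~\ref{thm:Holder-regularity} is in hand, the corollary is a bookkeeping exercise whose only nontrivial ingredient beyond the theorem is Simon's discreteness result, which is quoted as a black box.
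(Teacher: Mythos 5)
Your proof is correct and follows exactly the route the paper intends, which it leaves unstated: invoke Simon's discreteness of density values to find a gap $\delta_0$ above $\Theta(\alpha)$, observe that $\Theta(x)\ge\Theta(\alpha)$ on $\sing_\alpha u$ (which the paper itself notes as evident from the definition, in the proof of Proposition~\ref{prop:all-balls}), so that the set in Corollary~\ref{cor:discrete-levels} coincides with the set in Theorem~\ref{thm:Holder-regularity} once $\delta$ is replaced by $\min(\delta,\delta_0)$, and then apply the theorem. The remark that the set $B$ is $\gamma$-independent while the theorem supplies a $C^{0,\gamma}$ structure for each $\gamma$ is a welcome touch of care in handling the ``with any $\gamma$'' clause.
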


Specializing to the case $\cN = \Sph^2$ and recalling the classification of tangent maps \cite{BreCorLie86}, we obtain a partial generalization of Theorem \ref{thm:HarLin90} \cite{HarLin90} to arbitrary dimensions: 

\begin{cor}
\label{cor:sphere-target}
If $u \colon \B^n \to \Sph^2$ is an energy minimizing map, then the top-dimensional part $\sing_* u$ forms an open subset of $\sing u$ and it is a topological $(n-3)$-dimensional manifold of H{\"o}lder class $C^{0,\gamma}$ with any $0 < \gamma < 1$. 
\end{cor}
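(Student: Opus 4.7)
The plan is to reduce Corollary \ref{cor:sphere-target} to Corollary \ref{cor:discrete-levels}, which applies since $\Sph^2$ is real-analytic. The only extra input needed is the classification of Brezis, Coron and Lieb \cite{BreCorLie86}: every energy-minimizing $0$-homogeneous map $\R^3 \to \Sph^2$ is of the form $x \mapsto \phi(x/|x|)$ for a conformal or anti-conformal diffeomorphism $\phi$ of $\Sph^2$, so it has type $\pm 1 \in \pi_2(\Sph^2) = \Z$ and energy density exactly $4\pi$.

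First I would identify the indecomposable classes in $\pi_2(\Sph^2) = \Z$. Holomorphic and anti-holomorphic representatives give $\Theta(d) = 4\pi |d|$, and for $|d| \ge 2$ the decomposition $d = \pm 1 \pm 1 \pm \cdots \pm 1$ writes $d$ as a sum of classes of strictly smaller energy $\Theta(\pm 1) = 4\pi$; so the only indecomposable classes are $\alpha = \pm 1$. Next, I would show that $\sing_* u = \sing_1 u \cup \sing_{-1} u$. By the definition of the top stratum from \S \ref{ch:tangent-maps}, any $x \in \sing_* u$ admits a tangent map with an $(n-3)$-dimensional translation spine, which therefore factors as $\psi(x) = \omega(\Pi x)$ for some $0$-homogeneous energy-minimizer $\omega \colon \R^3 \to \Sph^2$; Brezis--Coron--Lieb forces the type of $\omega$ to be $\pm 1$, placing $x$ in $\sing_{\pm 1} u$. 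Conversely, membership in $\sing_\alpha u$ with $\alpha \in \pi_2(\Sph^2)$ already presupposes such a spine, so $\sing_{\pm 1} u \subseteq \sing_* u$. The same classification also forces the energy density at every such $x$ to equal $4\pi = \Theta(\pm 1)$, so the energy-density constraint in Corollary \ref{cor:discrete-levels} is automatic on $\sing_* u$.

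Assembling these, Corollary \ref{cor:discrete-levels} applied to each $\alpha \in \{+1,-1\}$ produces an open subset $E_\alpha \subseteq \sing u$ which is a $C^{0,\gamma}$ topological $(n-3)$-manifold for every $\gamma \in (0,1)$. Their union is open in $\sing u$ and equals $\sing_* u$; at any overlap point both $E_{+1}$ and $E_{-1}$ contain the same open neighborhood of that point in $\sing u$, so the chart of either one provides a local $C^{0,\gamma}$ chart for the union.

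The only real obstacle is the definitional bookkeeping between the ``top stratum'' of \S \ref{ch:tangent-maps} and the sets ``$\sing_\alpha u$ with $\alpha \in \pi_2(\Sph^2)$'' of \S \ref{ch:HCMs}; once Brezis--Coron--Lieb pins both down to the two classes $\pm 1$ sharing the common energy level $4\pi$, the corollary does the rest.
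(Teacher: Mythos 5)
Your overall plan is the intended one: reduce to Corollary~\ref{cor:discrete-levels} via the Brezis--Coron--Lieb classification. The identification $\sing_* u = \sing_{+1} u \cup \sing_{-1} u$, the observation that the energy density on $\sing_* u$ is automatically the minimal level, and the ``union of open bi-H{\"o}lder manifolds is a bi-H{\"o}lder manifold'' step are all correct.

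However, there is a real misconception in your justification of which classes are indecomposable. You write that $\Theta(d) = 4\pi\,|d|$ because holomorphic representatives of degree $d$ realize that energy. But $\Theta(\alpha)$ in Definition~\ref{df:notation} is the infimum over \emph{HCMs} of type $\alpha$ -- that is, over maps whose $0$-homogeneous extension to $\R^3$ (equivalently $\R^n$) is \emph{energy minimizing}, not merely harmonic. The Brezis--Coron--Lieb classification says precisely that the only such energy-minimizing homogeneous maps into $\Sph^2$ are $x \mapsto \pm Rx/|x|$ (isometries restricted to $\Sph^2$, not general conformal maps). Consequently $\Theta(d) = \infty$ for $|d| \ge 2$, as the paper's own Example after Definition~\ref{df:decomposition} records. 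The reason degree $|d|\ge 2$ does not contribute any indecomposable classes is therefore not that it decomposes into lower-energy pieces, but that $\Theta(d) = \infty$, so Definition~\ref{df:decomposition} does not apply at all. Your stated decomposition argument happens to reach the same list $\{\pm 1\}$, so the proof of the Corollary is unaffected, but you should correct the value of $\Theta$ and the inference drawn from it: it is confusing the space of harmonic spheres (which does realize $4\pi|d|$ and does undergo bubbling) with the much smaller space of HCMs. The final assembly -- that the energy density at any $x \in \sing_* u$ is forced to equal $\Theta(\pm 1)$ because every HCM into $\Sph^2$ has that density, so the hypothesis of Corollary~\ref{cor:discrete-levels} is automatic -- is exactly right and is the heart of the specialization.
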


\subsection*{An outline}

Section \ref{ch:preliminaries} recalls all notions and results needed in the sequel. These mostly come from the seminal work of Schoen and Uhlenbeck \cite{SchUhl82}, but the presentation here follows Simon's lecture notes \cite{Sim96}. We also define indecomposable homotopy classes of maps from $\Sph^2$ into $\cN$. 

Since Theorem \ref{thm:Holder-regularity} only concerns the singularities of indecomposable types, it is worthwhile to investigate the existence of such classes, which we do in Section \ref{ch:good-classes-exist}. Indeed, we show that for any $\cN$ the second homotopy group $\pi_2(\cN,p)$ is generated (up to the action of $\pi_1(\cN,p)$ on $\pi_2(\cN,p)$) by indecomposable homotopy classes. This is very close to the classical (slightly weaker) result due to Sacks and Uhlenbeck \cite{SacUhl81} (see also \cite{Str85}): smooth harmonic maps from $\Sph^2$ into $\cN$ generate the whole group $\pi_2(\cN,p)$ (up to the action of $\pi_1(\cN,p)$). 

To obtain bi-H{\"o}lder-equivalence with a Euclidean ball, we employ Reifenberg's topological disc theorem \cite{Rei60} (see also~\cite{Sim-notes}). We recall its statement and the so-called Reifenberg flatness condition in Section \ref{ch:flatness}. We also introduce a flatness condition for an energy minimizer $u$ which includes Reifenberg flatness for $\sing u$, but also forces $u$ to be close to a tangent map. 

The main results are proved in Section \ref{ch:proof}. The difficulty in applying Reifenberg's theorem to $\sing u$ lies in showing that this set has no gaps. This is done in Lemma \ref{lem:no-drop}; this is also the point where our topological assumptions play a role. Then we are able to show that if $u$ satisfies the flatness condition on the ball $\B_2(0)$, it also satisfies the same condition on each smaller ball $\B_r(0)$ (Corollary \ref{cor:all-scales}) and on each ball $\B_1(z)$ centered at a point $z \in \B_1$ with enough energy density (Proposition \ref{prop:all-balls}). Combining these results, we check the hypotheses of Reifenberg's theorem and establish Theorem \ref{thm:Holder-regularity}.

Some interesting observations not needed for the proof of Theorem \ref{thm:Holder-regularity} are gathered in Section \ref{ch:additional}. 

\section{Preliminaries}
\label{ch:preliminaries}

\subsection*{Regularity of energy minimizers}

In what follows, $u \colon \B^n \to \cN$ is an energy minimizing map into a closed Riemannian manifold $\cN$. Here we recall the basic properties of such maps \cite{SchUhl82}. 

Denote the rescaled energy 
\[
\theta_u(x,r) := r^{2-n} \int_{\B_r(x)} |\nabla u|^2 
\quad \text{for } \B_r(x) \subseteq \B_1, 
\]
which is monotone in $r$: 
\[
\tfrac{\partial}{\partial r} \theta_u(x,r) = 2 \int_{\partial \B_r(x)} \frac{|\nabla u \cdot (y-x)|^2}{|y-x|^n} \ge 0.
\]
This enables us to define the energy density at $x$: 
\[
\theta_u(x,0) := \lim_{r \to 0} \theta_u(x,r),
\]
which is by definition an upper semicontinuous function (in both $x \in \B^n$ and $u \in W^{1,2}$) \cite[2.11]{Sim96}. Obviously, $\theta_u(x,0)=0$ at regular points. 

The main regularity statement is the following $\varepsilon$-regularity theorem: 
\begin{align}
\label{eq:eps-regularity}
\text{there is } \varepsilon(n,\cN)>0 \text{ s.t. } \theta_u(x,2r) < \varepsilon 
& \Rightarrow u \text{ is smooth on } \B_r(x), \\
\nonumber
\text{in particular } \theta_u(x,0) < \varepsilon & \Rightarrow x \notin \sing u.
\end{align}

We also note two compactness theorems for a sequence $u_k$ of energy minimizers: 
\begin{itemize}
\item
if $u_k \rightharpoonup u$ in $H^1$, then $u$ is an energy minimizer and the convergence is actually strong in $H^1$ \cite{Luc88}, 
\item
if $u_k \to u$ as above, then the convergence is uniform on compact sets disjoint from $\sing u$ \cite{SchUhl82}. 
\end{itemize}

\subsection*{Tangent maps}
\label{ch:tangent-maps}

Given an energy minimizer $u \colon \B^n \to \cN$ and a point $x \in \B^n$, consider the family of rescaled maps $u_r(y) = u(x+ry)$. By the results from the previous section, each sequence $r_j \to 0$ has a subsequence for which $u_{r_j}$ converges in $W^{1,2}_\loc(\R^n)$ to some energy minimizer $\varphi$, called a tangent map of $u$ at $x$ (possibly non-unique). This limit map is homogeneous, i.e., $\varphi(\lambda x) = \varphi(x)$ for all $\lambda > 0$, $x \in \R^n$. 

For a homogeneous energy minimizer $\varphi \colon \R^n \to \cN$, the energy density $\theta_\varphi(y,0)$ is maximal at $y=0$; moreover, equality $\theta_\varphi(y) = \theta_\varphi(0)$ at some other point $y$ leads to higher symmetry: $\varphi(x+ty) = \varphi(x)$ for all $t \in \R$, $x \in \R^n$. Let $S(\varphi)$ be defined by 
\[
S(\varphi) = \left\{ y \in \R^n : \theta_\varphi(y) = \theta_\varphi(0) \right\}. 
\]
Then $S(\varphi)$ is a linear subspace of $\R^n$ describing the symmetries of $\varphi$: 
\[
\varphi(x+y) = \varphi(x) \quad \text{for all } x \in \R^n, \ y \in S(\varphi). 
\]
For non-constant $\varphi$, we have $S(\varphi) \subseteq \sing \varphi$. If $\dim S(\varphi) = n-3$, this is necessarily an equality. 

\subsection*{Top-dimensional part of the singular set}
\label{ch:HCMs}

If $u$ is an energy minimizer, for each $j = 0,1,2,\ldots,n-1$ we define 
\[
S_j = \left\{ y \in \sing u : \dim S(\varphi) \le j \text{ for all tangent maps $\varphi$ of $u$ at $y$} \right\},
\]
which leads to the classical stratification of the singular set 
\[
S_0 \subseteq S_1 \subseteq \ldots \subseteq S_{n-3} = S_{n-2} = S_{n-1} = \sing u.
\]
It is known \cite{SchUhl82} that each $S_j$ has Hausdorff dimension at most $j$, in particular 
\[
\dim_H \sing u \le n-3.
\] 

\medskip

In what follows, we are interested in the top-dimensional part of the singular set: 
\begin{align*}
\sing_* u & = S_{n-3} \setminus S_{n-4} \\
& = \left\{ y \in \sing u : \dim S(\varphi) = n-3 \text{ for some tangent map $\varphi$ of $u$ at $y$} \right\}. 
\end{align*}
Note that 
\[
\dim_H (\sing u \setminus \sing_* u) \le n-4. 
\]
Following \cite{Sim96}, we shall call any homogeneous energy minimizing $\varphi \colon \R^n \to \cN$ with $\dim S(\varphi) = n-3$ a \textit{homogeneous cylindrical map} (abbreviated HCM). 

\medskip

Consider now such a HCM $\varphi_0 \colon \R^n \to \cN$ with $S(\varphi_0) = \R^{n-3} \times \mathbf{0}$. This map actually depends only on $3$ variables, i.e. $\varphi_0(x,y) = \varphi_1(y)$ for some homogeneous $\varphi_1 \colon \R^3 \to \cN$. By \cite[Lemma~2.1]{HarLin90}, the map $\varphi_1$ defined in this way is energy minimizing if and only if $\varphi_0$ is. Since $\varphi_1$ is homogeneous, it is uniquely determined by its restriction to the unit sphere $\varphi_2 \colon \Sph^2 \to \cN$, which is a smooth harmonic map. 

From now on, we shall abuse the notation and use the same symbol for all three maps $\varphi_0,\varphi_1,\varphi_2$; the precise meaning should be clear from the context. Note that their energies differ by a multiplicative constant: 
\[
\int_{\Sph^2} |\nabla \varphi_2|^2 = \int_{\B_1^3} |\nabla \varphi_1|^2 = C(n) \int_{\B_1^n} |\nabla \varphi_0|^2, 
\]
so energy comparison does not lead to confusion. 

\textit{Homotopy type} of a HCM always refers to the map $\varphi_2 \colon \Sph^2 \to \cN$ (as $\varphi_0,\varphi_1$ are discontinuous and defined on contractible domains). For a general HCM $\varphi_0$ we may choose a rotation $q$ that maps $S(\varphi_0)$ to $\R^{n-3} \times \mathbf{0}$ and thus reduce to the previous case. We then say that $\varphi_0$ has homotopy type $\alpha$ if $\varphi_0 \circ q^{-1}$ restricted to $\mathbf{0} \times \Sph^2$ has type $\alpha$. 

\begin{rem}
There is a subtle ambiguity here. Depending on the choice of $q$, we may obtain two homotopy types that differ by a composition with the antipodal map, i.e. both $[\varphi_2(x)]$ and $[\varphi_2(-x)]$. 
\end{rem}

Using this terminology, singular points in $\sing_* u$ can be classified according to their energy density and the homotopy type of a tangent map. Since we only consider basepoint-free homotopies, we denote by $\pi_2(\cN)$ the set of homotopy classes of continuous maps $\Sph^2 \to \cN$. Note that in general it does not carry a group structure, as it is the quotient of the action of $\pi_1(\cN,p)$ on $\pi_2(\cN,p)$. 
\begin{df}
\label{df:notation}
For any homotopy type $\alpha \in \pi_2(\cN)$ we let 
\[
\sing_{\alpha} u = 
\left\{ y \in \sing u : \text{ some tangent map of $u$ at $y$ is a HCM of type } \alpha \right\}. 
\]
We also denote its lowest energy level by 
\[
\Theta(\alpha) := \inf \left \{ \int_{\B_1^n} |\nabla \varphi|^2 : \varphi \text{ is a HCM of type } \alpha \right \}.
\]
A simple compactness argument shows that this infimum is either infinite (if no HCM has type $\alpha$) or achieved by some minimal HCM. We also let 
\[
\sing_{\ge \Theta} u = 
\left\{ y \sing u : \theta_u(y,0) \ge \Theta \right\}. 
\]
which is a closed set by upper semicontinuity of $\theta_u(\cdot,0)$. 
\end{df}

At this point we cannot exclude the case when there are many homotopically different tangent maps at one point. However, this cannot happen under an additional assumption described below (see Remark \ref{rem:unique-type}). Again, since $\pi_2(\cN)$ is not necessarily a group, the decomposition in Definition \ref{df:decomposition} is to be understood up to the action of $\pi_1(\cN)$, as described in Section \ref{ch:good-classes-exist} (see also the formulation of \cite[Thm.~5.9]{SacUhl81}). 

\begin{df}
\label{df:decomposition}
Consider $\alpha \in \pi_2(\cN)$ with $\Theta(\alpha) < \infty$. This homotopy class is called decomposable if there is a decomposition 
\[
\alpha = \alpha_1 + \ldots + \alpha_k \quad \text{in } \pi_2(\cN),
\]
where $\Theta(\alpha_j) < \Theta(\alpha)$ for each $j=1,\ldots,k$. Otherwise $\alpha$ is called indecomposable. 
\end{df}

Note that the above criterion does not depend on the dimension $n$, but only on the manifold $\cN$. 

As a special case, $\alpha$ is indecomposable if $\Theta(\alpha)$ is the smallest among all non-trivial homotopy types. In this case the proof is much easier (see the remark below Lemma \ref{lem:no-drop}).

\medskip

Similar decompositions of this type appear naturally as a result of the \textit{bubbling phenomenon} when one tries to minimize the energy in a given homotopy class. More precisely, recall that by \cite{SacUhl81} (see also \cite{Str85}) any smooth map $\varphi \colon \Sph^2 \to \cN$ can be decomposed as $[\varphi] = [\varphi_1] + \ldots + [\varphi_k]$, where each $\varphi_j$ is a harmonic map and 
\[
\sum_{j=1}^k \int_{\Sph^2} |\nabla \varphi_j^2| \le \int_{\Sph^2} |\nabla \varphi^2|.
\]
Motivated by these decompositions, one could replace the condition $\Theta(\alpha) > \max_j \Theta(\alpha_j)$ in Definition \ref{df:decomposition} by $\Theta(\alpha) \ge \sum_j \Theta(\alpha_j)$, thus enlarging the set of indecomposable classes. 
A natural conjecture here would be that Theorem \ref{thm:Holder-regularity} continues to hold in this case, but the author was not able to verify it. 

\begin{ex}
By the classification from \cite{BreCorLie86}, the only HCMs into the sphere $\Sph^2$ are isometries $\varphi \colon \Sph^2 \to \Sph^2$. Thus, for $\alpha \in \pi_2(\Sph^2)$ we have 
\[
\Theta(\alpha) = 
\begin{cases}
0 & \text{ for } \alpha = 0, \\
4 \pi & \text{ for } \alpha = [\pm \id], \\
\infty & \text{ otherwise.}
\end{cases}
\]
By Definition \ref{df:decomposition}, the indecomposable classes here are $0,[\id],[-\id]$. Note that these classes generate the whole group $\pi_2(\Sph^2)$ (see Proposition \ref{prop:indecomposable-types} for the general case). 
\end{ex}

\section{Existence of indecomposable homotopy classes}
\label{ch:good-classes-exist}

We show that the set of all indecomposable homotopy classes generates $\pi_2(\cN)$. Similarly to \cite[Thm.~5.9]{SacUhl81}, we only consider basepoint-free homotopies, so this statement should be understood as generating $\pi_2(\cN,p)$ up to the action of $\pi_1(\cN,p)$. In other words, for any $\alpha \in \pi_2(\cN)$ there are indecomposable homotopy classes $\alpha_1,\ldots,\alpha_k \in \pi_2(\cN)$ and a continuous map 
\[
u \colon \B^3 \setminus \bigcup_{j=1}^k \B_j \to \cN 
\]
such that $u|_{\partial \B} \in \alpha$ and $u|_{\partial \B_j} \in \alpha_j$, where $\B_j \Subset \B$ are smaller disjoint balls. 

\medskip

This can be divided into two steps as follows. 

\begin{prop}
\label{prop:indecomposable-types}
Let $\cN$ be a closed Riemannian manifold. Then 
\begin{enumerate}[(a)]
\item
the set of all HCMs $\varphi \colon \Sph^2 \to \cN$ generates $\pi_2(\cN)$, 
\item
each HCM $\varphi \colon \Sph^2 \to \cN$ as an element of $\pi_2(\cN)$ can be decomposed into indecomposable homotopy classes. 
\end{enumerate}
\end{prop}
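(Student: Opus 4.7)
The plan is to translate the topological generation question into a singularity decomposition for an energy minimizer in dimension three. Fix $\alpha \in \pi_2(\cN)$ and pick a smooth representative $\varphi \colon \Sph^2 \to \cN$; extend it smoothly to $\B^3$ (for instance using a collar and a nearest-point projection onto $\cN$) to obtain an admissible $H^1$ competitor. Minimize the Dirichlet energy in $H^1(\B^3,\cN)$ with trace $\varphi$ to produce an energy minimizer $u \colon \B^3 \to \cN$. By Schoen--Uhlenbeck regularity in dimension $n=3$, $\sing u$ consists of finitely many interior points $x_1,\ldots,x_k$. At each $x_i$, pick a tangent map $\varphi_i$; since $\dim S(\varphi_i) \ge n-3 = 0$ automatically and $\varphi_i$ is nonconstant at a singular point, $\varphi_i$ is a HCM in the sense of Section \ref{ch:HCMs}, with some homotopy type $\alpha_i \in \pi_2(\cN)$.

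The topological identity $\alpha = \alpha_1 + \ldots + \alpha_k$ (up to the action of $\pi_1(\cN)$) is then read off from $u$ itself. For small enough disjoint radii $r_i > 0$, the map $u$ is continuous on the cobordism $M := \B^3 \setminus \bigcup_i \B_{r_i}(x_i)$, whose boundary consists of the outer sphere $\partial \B^3$ (on which $u$ equals $\varphi \in \alpha$) and the inner spheres $\partial \B_{r_i}(x_i)$. The strong convergence of the rescalings $u(x_i + r \, \cdot)$ to $\varphi_i$ in $C^0$ away from the origin shows that, after choosing $r_i$ small enough, the restriction $u|_{\partial \B_{r_i}(x_i)}$ lies in the class $\alpha_i$. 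Continuity of $u$ on $M$ then yields the desired additive relation, which is the standard sum rule for degrees of boundary components of a cobordism.

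\textbf{Plan for part (b).} The plan is a recursive descent on the energy level $\Theta(\alpha)$. If $\alpha$ is indecomposable there is nothing to prove. Otherwise Definition \ref{df:decomposition} furnishes $\alpha = \beta_1 + \ldots + \beta_m$ with $\Theta(\beta_j) < \Theta(\alpha) < \infty$; in particular each nontrivial $\beta_j$ is the class of a minimal HCM (the infimum in the definition of $\Theta(\beta_j)$ is attained by the compactness statement recalled in Section \ref{ch:preliminaries}). Discarding the trivial summands and iterating yields a tree of decompositions whose leaves, if the process terminates, form the desired indecomposable decomposition of $\alpha$.

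\textbf{Main obstacle and termination.} The subtle point is finiteness of this recursion: the energies $\Theta(\beta_j)$ strictly decrease along each branch but a priori need not do so in discrete steps. To handle this, I would invoke the Sacks--Uhlenbeck type lower bound that every nontrivial smooth harmonic sphere in $\cN$ carries energy at least some $\varepsilon_0(\cN) > 0$, so that only homotopy classes with $\Theta(\beta) \ge \varepsilon_0$ ever appear in the tree. Combined with the compactness of the space of HCMs with energy bounded by $\Theta(\alpha)$ (which follows from the strong $H^1$-compactness of energy minimizers cited in Section \ref{ch:preliminaries}), this gives finiteness of the set of HCM homotopy classes of energy at most $\Theta(\alpha)$, and hence well-foundedness of the ordering on $\Theta$-values appearing in the decomposition tree. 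The recursion must therefore stop after finitely many steps, producing the sought decomposition into indecomposable classes. The topological bookkeeping in (a), specifically the passage from $C^0$-proximity of $u$ to $\varphi_i$ on the collar to equality of homotopy classes in $\pi_2(\cN)$ (modulo $\pi_1(\cN)$), is the other place where care is required.
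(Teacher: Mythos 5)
Your approach coincides with the paper's: in (a) you minimize the Dirichlet energy on $\B^3$ with boundary trace $\varphi$, read off the decomposition from the finitely many interior singularities and their tangent maps, and pass to homotopy classes via uniform convergence away from each singularity; in (b) you iterate decompositions and argue termination by compactness. One small remark on (b): the Sacks--Uhlenbeck gap bound $\varepsilon_0(\cN)$ you invoke is not actually needed and, on its own, does not rule out infinitely many distinct $\Theta$-values accumulating from above at some positive level. The step that really gives finiteness — and which you should state explicitly rather than leave folded into the word ``compactness'' — is that a sequence of HCMs with bounded energy, after extracting a $W^{1,2}$-strongly convergent subsequence, converges \emph{uniformly} on $\Sph^2$ (this is the compactness statement about convergence on compact sets away from the singular set), so that all but finitely many of them share the same homotopy type; this directly shows that only finitely many homotopy classes $\beta$ satisfy $\Theta(\beta) \le \Theta(\alpha)$, whence every branch of the decomposition tree has length at most that finite number.
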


\begin{proof}
To show part (a), fix $\alpha \in \pi_2(\cN)$ and choose a smooth map $\varphi \colon \Sph^2 \to \cN$ of this type. Then there exists (possibly non-unique) $u \in W^{1,2}(\B_1^3,\cN)$ such that 
\[
\int_{\B_1} |\nabla u|^2 = \min \left \{ \int_{\B_1} |\nabla v|^2 : v \in W^{1,2}(\B_1,\cN), \ v = \varphi \text{ on } \Sph^2 \right \}.
\]
Such a minimizer has at most a~finite number of interior singularities $p_1,\ldots,p_k \in \B_1$. At each $p_j$ there is a~(possibly non-unique) tangent map $\varphi_j$, which is necessarily a~HCM; by uniform convergence away from the singularity, $u$ restricted to $\partial \B_r(p_j)$ is homotopic to $\varphi_j$ for some arbitrary small $r$ (in consequence, also for all sufficiently small $r$). This yields the decomposition 
\[
[\varphi] = [\varphi_1] + \ldots + [\varphi_k] \quad \text{ in } \pi_2(\cN). 
\]

Part (b) follows from the definition by a compactness argument, which allows us to exclude infinite decompositions. Consider any homotopy type $\alpha \in \cN$ represented by a HCM, i.e. with $\Theta(\alpha) < \infty$. First let us show that there are only finitely many homotopy types $\beta \in \pi_2(\cN)$ with $\Theta(\beta) \le \Theta(\alpha)$. Indeed, otherwise we would have an infinite sequence of HCMs $\varphi_k \colon \B_1^3 \to \cN$ with distinct homotopy types and uniformly bounded energy. Without loss of generality, $\varphi_k$ converges to some HCM $\varphi$ in $W^{1,2}(\B_1)$, but also in $C^0(\Sph^2)$. This shows that almost all $\varphi_k$ have the same homotopy type as $\varphi$, which is a contradiction. 

If $\alpha$ is decomposable, we have $\alpha = \alpha_1 + \ldots + \alpha_k$, where $\Theta(\alpha_j) < \Theta(\alpha)$ for each $j$. Decomposing further each $\alpha_j$ whenever possible, and iterating this procedure until all obtained homotopy types are indecomposable, we arrive at claim (b). One only needs to note that this procedure stops after at most $N$ steps, where $N$ is the number of homotopy types from the last paragraph. Indeed, any branch of the decomposition tree is a sequence $\beta_0, \beta_1, \beta_2, \ldots$ with $\beta_0 = \alpha$ and $\Theta(\beta_{j+1}) < \Theta(\beta_j)$ for each $j$, so it contains at most $N$ elements. 
\end{proof}

We remark that a similar decomposition was obtained by Sacks and Uhlenbeck \cite{SacUhl81} (see also \cite{Str85}): smooth harmonic maps from $\Sph^2$ into $\cN$ generate the whole group $\pi_2(\cN,p)$ up to the action of $\pi_1(\cN,p)$. It may be that some homotopy classes in $\pi_2(\cN)$ do not contain any harmonic map. Since here we only consider harmonic maps $\varphi \colon \Sph^2 \to \cN$ for which the homogeneous extension $\varphi \colon \B^3 \to \cN$ is energy minimizing, our result is a slight generalization. 

\section{Notions of flatness and Reifenberg's topological disc theorem}
\label{ch:flatness}

H{\"o}lder regularity of the singular set will be obtained by an application of Reifenberg's topological disc theorem \cite{Rei60} (see also~\cite{Sim-notes}). To state it, we first need the following notion of flatness (for our purposes restricted to codimension $3$). 

\begin{df}
\label{df:set-flatness}
A set $A \subseteq \R^n$ is said to be $\varepsilon$-Reifenberg flat in the ball $\B_r(x)$ (with respect to $L$) if 
\[
A \cap \B_r(x) \subseteq \B_{r \varepsilon} L 
\quad \text{and} \quad
L \cap \B_r(x) \subseteq \B_{r \varepsilon} A 
\]
for some $(n-3)$-dimensional affine plane $L$ through $x$.
\end{df}

The above condition means exactly that the normalized Hausdorff distance on $\B_r(x)$ from $A$ to some $(n-3)$-dimensional affine plane through $x$ is not larger than~$\varepsilon$. 

\begin{thm}[Reifenberg's topological disc theorem]
\label{thm:Reifenberg}
For each H{\"o}lder exponent $0 < \gamma < 1$ there is $\varepsilon(n,\gamma) > 0$ such that the following holds. If a closed set $A \subseteq \R^n$ containing the origin is $\varepsilon$-Reifenberg flat in each ball $\B_r(x)$ with $x \in A \cap \B_1$ and $r < 1$, then the set $A \cap \B_1$ is bi-H{\"o}lder equivalent to the closed unit ball $\B^{n-3} \subseteq \R^{n-3}$ with exponent $\gamma$. 
\end{thm}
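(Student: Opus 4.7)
The classical approach is to build a bi-H{\"o}lder homeomorphism $\Phi \colon L_0 \cap \B_1 \to A \cap \B_1$, where $L_0$ is a fixed $(n-3)$-plane approximating $A$ in $\B_1$, as an infinite composition of small perturbations of the identity, each adapted to the dyadic scale $r_k = 2^{-k}$. At scale $r_k$, cover $A \cap \B_{3/2}$ by balls $\B_{r_k}(x_i^{(k)})$ centered at a maximal $(r_k/5)$-separated subset of $A$, let $L_i^{(k)}$ be the Reifenberg planes provided by hypothesis, and with a Lipschitz partition of unity $\{\psi_i^{(k)}\}$ subordinate to this cover define
\[
\sigma_k(y) = \sum_i \psi_i^{(k)}(y) \, \pi_{L_i^{(k)}}(y),
\]
where $\pi_L$ denotes orthogonal projection onto $L$. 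The map $\Phi_k$ is engineered from $\sigma_k$ so that it modifies $y$ only in the directions normal to the previous-scale plane and reduces to the identity outside a neighborhood of $A$ of width $\sim r_k$.

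Two geometric estimates drive the argument. First, the two-sided Reifenberg flatness forces the planes $L_i^{(k)}$ and $L_j^{(k)}$ associated to overlapping balls to differ in angle by $O(\varepsilon)$; the same bound holds for the change between consecutive scales $k, k+1$ near a common point. Second, these angle bounds translate into $\|\Phi_k - \id\|_\infty \le C\varepsilon r_k$ and $\|D\Phi_k - I\| \le C\varepsilon$. Telescoping $\|\Phi_k \circ \cdots \circ \Phi_0 - \Phi_{k-1}\circ \cdots \circ \Phi_0\|_\infty \le C\varepsilon\, r_k$ then gives uniform convergence to a continuous limit $\Phi$. The bi-H{\"o}lder bound is a standard two-scale argument: if $|p - q| \in [r_{k+1}, r_k]$, the compositional distortion accumulated through $k$ scales is $(1 \pm C\varepsilon)^k$, and $k \approx \log_2 (1/|p-q|)$ yields
\[
|p-q|^{1+C'\varepsilon} \;\le\; |\Phi(p) - \Phi(q)| \;\le\; |p-q|^{1-C'\varepsilon},
\]
so choosing $\varepsilon$ small in terms of $\gamma$ makes $\Phi$ bi-H{\"o}lder of exponent $\gamma$.

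The main obstacle is verifying that $\Phi$ maps $L_0 \cap \B_1$ \emph{onto} $A \cap \B_1$. Injectivity is the easy half: the uniform bi-Lipschitz lower bound for each $\sigma_k$ descends to the composition. Surjectivity is more delicate and is precisely the place where the second inclusion $L \cap \B_r(x) \subseteq \B_{r\varepsilon} A$ from Definition \ref{df:set-flatness} is used essentially -- without this reverse inclusion, $A$ could be a proper closed subset of a topological disc. To prove it, fix $a \in A \cap \B_1$, use the $\varepsilon$-approximation of $A$ by each $\Phi_k \circ \cdots \circ \Phi_0(L_0)$ to produce preimages $p_k \in L_0 \cap \B_1$, check that $\{p_k\}$ is Cauchy via the same telescoping estimate, and identify $\Phi(\lim p_k) = a$ by continuity. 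This shows $\Phi(L_0 \cap \B_1)$ is simultaneously closed and dense in $A \cap \B_1$, hence equal to it.
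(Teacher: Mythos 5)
The paper does not prove this theorem; it is quoted as a classical result of Reifenberg, with references to \cite{Rei60} and \cite{Sim-notes}, and then applied as a black box in the proof of Theorem~\ref{thm:Holder-regularity}. Your sketch follows the standard construction from those sources --- dyadic approximation by $(n-3)$-planes, local projections glued by a Lipschitz partition of unity, angle estimates between planes of overlapping balls and of consecutive scales, telescoping of the $C\varepsilon r_k$ increments for uniform convergence, and the two-scale counting argument for the bi-H{\"o}lder bound --- so it is consistent with what the paper relies on.

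Two caveats. First, the claim that ``injectivity is the easy half'' because the bi-Lipschitz lower bound for each $\sigma_k$ ``descends to the composition'' is misleading as stated: each $\Phi_k$ is $(1+C\varepsilon)$-bi-Lipschitz, but the constant $(1+C\varepsilon)^N$ diverges as $N\to\infty$, so no uniform bi-Lipschitz bound survives the infinite composition. Injectivity of the limit really comes from the lower H{\"o}lder bound $|\Phi(p)-\Phi(q)|\ge |p-q|^{1+C'\varepsilon}$ that you derive afterwards via the two-scale counting (only roughly $k\approx\log_2(1/|p-q|)$ maps produce relative distortion between $p$ and $q$ before they travel together); this is precisely why the theorem is bi-H{\"o}lder rather than bi-Lipschitz. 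Second, the closing appeal to ``closed and dense'' is a detour: the preimage construction you describe already exhibits, for each $a\in A\cap\B_1$, a limit $p=\lim p_k$ with $\Phi(p)=a$, which is surjectivity directly; you do, however, need the lower H{\"o}lder bound (or the per-scale bi-Lipschitz bounds of the finite compositions) to verify that $\{p_k\}$ is Cauchy, not merely the telescoping estimate on $\Phi_k\circ\cdots\circ\Phi_0$.
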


\medskip

We shall also make repeated use of the following condition for energy minimizing maps. 

\begin{df}
\label{df:flatness}
Fix an indecomposable homotopy class $\alpha \in \pi_2(\cN)$ and let $\Theta = \Theta(\alpha)$ be its lowest energy level (as in Definition \ref{df:notation}). We say that an energy minimizer $u$ is $\delta$-flat in the ball $\B_r(x)$ (of type $\alpha$) if 
\begin{enumerate}
\item
\label{df:flatness-energy}
$x$ is a singular point of $u$ and $\Theta \le \theta_u(x,0) \le \theta_u(x,r) \le \Theta + \delta$, 
\item
\label{df:flatness-Reifenberg}
$\sing u$ is $\frac{1}{10}$-Reifenberg flat in $\B_r(x)$ with respect to some $L$, and $u$ restricted to $(x+L^\perp) \cap \partial \B_r(x)$ has homotopy type $\alpha$. 
\end{enumerate}
\end{df}

From now on, we consider a non-trivial indecomposable class $\alpha$ and its lowest energy level $\Theta = \Theta(\alpha)$ to be fixed. 

The main feature of this definition is that $\delta$-flatness in a ball trivially ensures that condition \ref{df:flatness-energy} is satisfied in all smaller concentric balls, and one only needs to check condition \ref{df:flatness-Reifenberg} (see Corollary \ref{cor:all-scales}). This is why the constant $\tfrac{1}{10}$ in condition \ref{df:flatness-Reifenberg} was chosen reasonably weak. 

\section{Regularity of the singular set}
\label{ch:proof}

\subsection*{Stability of indecomposable singularities}

As a first step, we show that if an energy minimizer $u$ restricted to some sphere has homotopy type $\alpha$, then $\sing u$ satisfies the flatness condition of Definition \ref{df:flatness} and the energy density of $u$ cannot drop in a smaller ball. Note that the claim of Lemma \ref{lem:no-drop} is essentially stronger than the condition $L \cap \B_1 \subseteq \B_{\varepsilon} (\sing_{\ge \Theta} u)$ from Definition \ref{df:set-flatness}. 

\medskip

Observe that some tubular neighborhood $\B_{\eta} \cN \subseteq \R^M$ admits a continuous retraction $\pi_\cN$ onto $\cN$. As a consequence, if two continuous functions $f,g$ into $\cN \subseteq \R^M$ are close enough in supremum norm, then 
\[
(t,x) \mapsto \pi_{\cN}(tf(x) + (1-t)g(x)) 
\]
yields a homotopy between them. 

\begin{lem}
\label{lem:no-drop}
Assume that $\sing u \cap \B_1 \subseteq \B_{\varepsilon} L$ for some $0 < \varepsilon < \tfrac 12$ and some $(n-3)$-dimensional plane $L$ through $0$. Assume further that $u$ restricted to $L^\perp \cap \partial \B_1$ has homotopy type $\alpha$. Then 
\[
L \cap \B_{1-\varepsilon} \subseteq \pi_L( \sing_{\ge \Theta} u \cap \B_1 ),
\]
where $\pi_L$ denotes the orthogonal projection onto $L$. In particular, $\sing_{\ge \Theta} u$ is $\varepsilon$-Reifenberg flat in $\B_1$. 
\end{lem}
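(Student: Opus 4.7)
I would fix $x_0 \in L \cap \B_{1-\varepsilon}$ and consider the 3-dimensional affine plane $P := x_0 + L^\perp$. Since singular points lie in $\B_\varepsilon L$ and every point of $P$ projects to $x_0$ under $\pi_L$, we have $\sing u \cap P \subseteq \overline{\B_\varepsilon(x_0)} \cap P$. The task is to find $y \in \sing u \cap P$ with $\theta_u(y, 0) \ge \Theta$.

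First I would show that $u$ restricted to the 2-sphere $P \cap \partial \B_\rho(x_0)$ has homotopy type $\alpha$ for any $\rho \in (\varepsilon, \sqrt{1-|x_0|^2}\,]$. The plan is to interpolate between $L^\perp \cap \partial \B_1$ and $P \cap \partial \B_\rho(x_0)$ through the continuous family
\[
S_t := \{ tx_0 + r(t)v : v \in L^\perp,\, |v|=1\}, \qquad r(t) := 1 - t(1-\rho), \qquad t \in [0,1].
\]
Each $S_t$ has distance $r(t) \ge \rho > \varepsilon$ from $L$, hence avoids $\sing u$; and a direct computation $|tx_0 + r(t)v|^2 = t^2|x_0|^2 + r(t)^2 \le 1$ (using $|x_0| \le 1-\varepsilon$ and $\rho^2 \le 1-|x_0|^2$) keeps $S_t$ inside $\overline{\B_1}$. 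The family $\{u|_{S_t}\}_t$ then realizes the required homotopy.

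Next I would argue by contradiction. Suppose no $y \in \sing u \cap P$ has $\theta_u(y, 0) \ge \Theta$, and aim to decompose $\alpha$ as a sum of classes in $\pi_2(\cN)$ with strictly smaller $\Theta$-values. Replace $u|_B$ on the 3-ball $B := P \cap \overline{\B_\rho(x_0)}$ by an energy minimizer $v \colon B \to \cN$ with $v|_{\partial B} = u|_{\partial B}$. Since $\dim B = 3$, $v$ is smooth away from a finite set of interior singularities $z_1, \ldots, z_k$, each with HCM tangent map of some type $\beta_j \in \pi_2(\cN)$; the argument underlying Proposition~\ref{prop:indecomposable-types}(a) yields the decomposition
\[
\alpha = [\beta_1] + \cdots + [\beta_k] \quad \text{in } \pi_2(\cN).
\]

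The crux is proving $\Theta(\beta_j) < \Theta(\alpha)$ for every $j$, which then contradicts the indecomposability of $\alpha$. My plan here is to combine the factor-$C(n)$ relation between $\Theta(\beta_j)$ and the three-dimensional density $\theta_v(z_j, 0)$ of the HCM tangent map (from Section~\ref{ch:HCMs}), the minimality $\int_B |\nabla v|^2 \le \int_B |\nabla u|^2$, and the contradiction hypothesis, which via upper semicontinuity yields a uniform bound $\theta_u \le \Theta' < \Theta$ on the compact set $\sing u \cap \overline{\B_\varepsilon(x_0)} \cap P$. The main obstacle will be this last conversion --- from the average energy bound on $v$ to a pointwise density bound $\theta_v(z_j, 0) < \Theta(\alpha)/C(n)$ at each $z_j$ --- and will likely require either a slicing argument over admissible radii $\rho$ or a more direct dimension-reduction analysis of the tangent maps of $u$ itself that avoids the competitor $v$. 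Once it is in place, the Reifenberg-flatness assertion follows routinely from $\sing_{\ge \Theta} u \subseteq \B_\varepsilon L$ together with the just-proved projection surjectivity (with a small adjustment for points of $L \cap \B_1$ with $|x| > 1-\varepsilon$).
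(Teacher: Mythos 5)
The interpolation step showing that $u$ restricted to $P \cap \partial \B_\rho(x_0)$ carries the class $\alpha$ is correct and in the same spirit as what the paper uses implicitly. However, the competitor $v$ is where the argument breaks down, and the gap you flag at the end is not merely a technical obstacle to be filled in later --- it is fatal to this route. The map $v$ is a genuinely new $3$-dimensional minimizer subject only to the boundary constraint $v|_{\partial B} = u|_{\partial B}$; its singular points $z_j$ and their densities $\theta_v(z_j,0)$ bear no relation to $\sing u \cap P$ or to $\theta_u$. In particular, the contradiction hypothesis ``$\theta_u(y,0) < \Theta$ for every $y \in \sing u \cap P$'' constrains the tangent maps of $u$, not those of $v$: nothing prevents $v$ from concentrating all its energy at one point with density equal to (or above) $\Theta(\alpha)$. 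Minimality gives $\int_B |\nabla v|^2 \le \int_B |\nabla u|^2$, which is an $L^1$ energy bound on the slice, but this cannot be converted into a pointwise density bound $\theta_v(z_j,0) < \Theta$ without exactly the kind of information you do not have. Also note that $u|_B$ need not itself be a $3$-dimensional minimizer, so you cannot replace $v$ by $u|_B$ either.

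The paper avoids the competitor entirely and works with the tangent maps of $u$ itself, which \emph{are} controlled by the contradiction hypothesis. The key input, missing from your proposal, is the Naber--Valtorta theorem: $\sing u$ has locally finite $\cH^{n-3}$-measure and is $(n-3)$-rectifiable with a classical tangent plane $\cH^{n-3}$-a.e. Combining this with Eilenberg's (coarea-type) inequality and a generic-transversality argument (Lemma~\ref{lem:transverse}), one finds that for a.e.\ choice of $x_0$ in the cylinder base, the slice $\sing u \cap \pi_L^{-1}(x_0)$ consists of finitely many points $p_1,\dots,p_k$, at each of which the tangent map of $u$ is a HCM of density $< \Theta$ by hypothesis and is transverse to the slicing direction. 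Uniform convergence of blow-ups away from $S(\varphi_j)$ then gives $\alpha = [\varphi_1] + \cdots + [\varphi_k]$ with every $\Theta(\varphi_j) < \Theta(\alpha)$, contradicting indecomposability. Your second suggested alternative (``a more direct dimension-reduction analysis of the tangent maps of $u$ itself'') is the right instinct; to carry it out you need the rectifiability and finite-measure results to make the slicing finite, which is precisely what is absent from the sketch.
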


Before giving the full proof, let us consider the special case when $\Theta(\alpha)$ is the lowest among all non-trivial homotopy types. In this case, the proof is simpler and does not depend on the deep results of Naber and Valtorta \cite{NabVal17}.

For each $y \in L \cap \B_{1-\varepsilon}$, $u$ restricted to the sphere $(y + L^\perp) \cap \partial \B_\varepsilon(y)$ has homotopy type $\alpha$, hence cannot be continuously extended to the ball $(y + L^\perp) \cap \B_\varepsilon(y)$. This shows the weaker inclusion $L \cap \B_{1-\varepsilon} \subseteq \pi_L( \sing u \cap \B_1 )$. Recall that $\cH^{n-3}$-a.e. point $z \in \sing u$ belongs to $\sing_* u$ and hence $\theta_u(z,0) \ge \Theta$ due to our additional assumption. Since $\sing_{\ge \Theta} u$ is a closed set, we obtain the stronger inclusion. 

\begin{proof}[Proof of Lemma \ref{lem:no-drop}]
For simplicity, let us rotate so that $L = \R^{n-3} \times \mathbf{0}$. Assume for the contrary that $L \cap \B_{1-\varepsilon}$ is not covered by the projection of $\sing_{\ge \Theta} u \cap \B_1$. Since the latter is a compact set, it has to be disjoint with some cylinder $\B^{n-3}_\delta(z) \times \R^3$. 

Recall that by the recent important work of Naber and Valtorta \cite{NabVal17}, the set $\sing u \cap \B_1$ has finite $\cH^{n-3}$ measure. Moreover, the set $\sing u$ is $(n-3)$-rectifiable and for $\cH^{n-3}$-a.e. $y \in \sing u$ there exists an $(n-3)$-dimensional tangent plane $\Tan(\sing u,y)$ coinciding with $S(\varphi)$ for every tangent map $\varphi$ of $u$ at $y$. Let us temporarily assume that these tangent planes are transversal to $\mathbf{0} \times \R^{n-3}$, i.e. 
\begin{equation}
\label{eq:transverse}
\Tan(\sing u,y) \pitchfork \mathbf{0} \times \R^3 \quad \text{for $\cH^{n-3}$-a.e. } y \in \sing u \cap \B_1. 
\end{equation}

We shall need Eilenberg's inequality, a Fubini type inequality valid for any $\cH^{n-3}$-measurable set $A$ with finite measure (see \cite[7.7,~7.8]{Mat95}): 
\[
\int_{\B^{n-3}_\delta(z)} \cH^0(A \cap \pi_L^{-1}(y)) \dd y \le \omega_{n-3} \cH^{n-3}(A) 
\]
Applying the above inequality twice -- once with $A$ as the singular set and once with $A$ as its exceptional part of measure zero -- we learn that for a.e. $y \in \B^{n-3}_\delta(z)$ the slice $\sing u \cap \B_1 \cap \pi_L^{-1}(y)$ consists of finitely many points, at each of them the tangent plane exists and is transverse to $\mathbf{0} \times \R^3$ (i.e. the direction of slicing). 

Let us choose one such $y$ and denote these singular points by $p_1,\ldots,p_k$. Let also 
\[
L_j := \Tan(\sing u, p_j), 
\quad 
r_0 := \tfrac 12 \min_{i,j} \left ( |p_i-p_j|, \varepsilon - |p_i-y| \right ).
\]
For each $j=1,\ldots,k$, there is a HCM $\varphi_j \colon \R^n \to \cN$ with $S(\varphi_j) = L_j$ such that the sequence of rescaled maps $u_{r_i}(x) = u(p_j+r_i x)$ converges to $\varphi_j$ in $W^{1,2}(\B_1)$ for some sequence $r_i \to 0$. Note that by our assumption, $\varphi_j$ has energy density strictly less than $\Theta$. Since this convergence is uniform away from $L_j$, maps $u_{r_i}$ and $\varphi_j$ are homotopic on $L_j^\perp \cap \partial \B_1$ for large enough $i$. Tilting $L_j^\perp$ to $\mathbf{0} \times \R^3$ and rescaling, we get that maps $u$ and $\varphi_j$ are homotopic on $\pi_L^{-1}(y) \cap \partial \B_{r_j}(p_j)$ for some small $r_j < r_0$. Recalling that $u$ restricted to $\mathbf{0} \times \R^3 \cap \partial \B_1$ (and hence also to $\pi_L^{-1}(y) \cap \partial \B_\varepsilon(y)$) has homotopy type $\alpha$, we conclude 
\[
\alpha = [\varphi_1] + \ldots + [\varphi_k] \quad \text{in } \pi_2(\cN),
\]
where each $\varphi_j$ has energy density smaller than $\Theta$, which is a contradiction with the assumption that $\alpha$ is indecomposable. 

To finish the proof, we need to get rid of the additional assumption \eqref{eq:transverse}. This is done by using the following simple transversality lemma. 

\begin{lem}
\label{lem:transverse}
Let $n = a+b$, consider the Grassmannian $G(n,a)$ with the standard volume measure $\lambda$ and $G(n,b)$ with a finite positive Borel measure $\mu$. Then the set 
\[
\{ E \in G(n,a) : \mu(\{ F \in G(n,b) : E \not\pitchfork F \}) > 0 \})
\]
has zero $\lambda$ measure. 
\end{lem}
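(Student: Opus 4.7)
The plan is a twofold application of Fubini's theorem on the product Grassmannian $G(n,a) \times G(n,b)$. I would first introduce the joint non-transversality locus
\[
N := \{(E,F) \in G(n,a) \times G(n,b) : E \not\pitchfork F\}
\]
and observe that it is closed in the product (since transversality is an open condition), hence Borel measurable with respect to the product measure $\lambda \times \mu$.

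The technical core is the one-sided slice claim: for each fixed $F \in G(n,b)$, the set $N^F := \{E \in G(n,a) : E \not\pitchfork F\}$ satisfies $\lambda(N^F) = 0$. Since $\dim E + \dim F = n$, the condition $E \pitchfork F$ is equivalent to $E \cap F = \{0\}$, equivalently $E + F = \R^n$. I would fix a basis $w_1,\ldots,w_b$ of $F$ and, on the open Stiefel manifold of $a$-frames $v_1,\ldots,v_a$, express this condition as $\det[v_1,\ldots,v_a,w_1,\ldots,w_b] \neq 0$. This determinant is a polynomial in the frame entries and is not identically zero, since it is nonzero at any frame for $F^\perp$. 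Therefore its zero set is a proper real-analytic subvariety of the Stiefel manifold; pushing down to $G(n,a)$, $N^F$ is a proper real-analytic subvariety and hence $\lambda$-negligible.

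Once the slice claim is in hand, Fubini's theorem applied to the finite product measure gives
\[
(\lambda \times \mu)(N) = \int_{G(n,b)} \lambda(N^F) \, d\mu(F) = 0,
\]
and a second application of Fubini in the opposite order produces
\[
0 = (\lambda \times \mu)(N) = \int_{G(n,a)} \mu(N_E) \, d\lambda(E), \qquad N_E := \{F \in G(n,b) : E \not\pitchfork F\}.
\]
Non-negativity of the integrand then forces $\mu(N_E) = 0$ for $\lambda$-almost every $E$, which is precisely the conclusion of the lemma.

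The only step requiring genuine input is the slice bound $\lambda(N^F) = 0$, and it reduces to the standard fact that a non-identically-zero real-analytic function on a connected analytic manifold has a Lebesgue-null vanishing set. I do not anticipate any substantive obstacle beyond setting up Borel measurability of $N$ so that Fubini is legitimately applicable to the (possibly non-$\sigma$-finite, but here finite on the $\mu$ factor) product measure.
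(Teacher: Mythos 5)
Your proof is correct and follows the same overall Fubini strategy as the paper; the only substantive difference is in how you establish the slice claim $\lambda(N^F)=0$. The paper decomposes the non-transversal set $\{E : E \not\pitchfork F\}$ into the Schubert-like strata $\{E : \dim(E\cap F)=c\}$, $c=1,\dots,\min(a,b)$, each a smooth submanifold of positive codimension; this argument does not use the dimensional balance $a+b=n$ at all. You instead exploit $a+b=n$ to rewrite non-transversality as $E\cap F\neq\{0\}$ and then as the vanishing of a determinant, so $N^F$ is the image of a proper real-analytic subvariety of the Stiefel manifold. Both routes are valid; yours is a bit more computational and tied to the complementary-dimension case (which is all the lemma asserts), while the paper's stratification argument generalizes verbatim to other codimensions. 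One small remark: your concern about the product measure being non-$\sigma$-finite is unnecessary --- the standard volume measure $\lambda$ on the compact Grassmannian is finite, as is $\mu$ by hypothesis, so Tonelli/Fubini applies with no further care.
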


Postponing its proof for the moment, we complete the reasoning as follows. Choose $a = 3$, $b = n-3$, and let $\mu$ be the measure $\cH^{n-3} \llcorner \sing u \cap \B_1$ pushed-forward by the map $\Tan(\sing u, \cdot)$, i.e.
\[
\mu(U) = \cH^{n-3} (\{ y \in \sing u \cap \B_2 : \Tan(\sing u, y) \in U \}). 
\]
Then the set in Lemma \ref{lem:transverse} has measure zero, in particular its complement is dense. Hence we can choose $E \in G(n,3)$ so that $E \pitchfork F$ for $\mu$-a.e. $F \in G(n,n-3)$, with $E$ arbitrarily close to $\mathbf{0} \times \R^3$. This amounts to satisfying \eqref{eq:transverse} with a slightly tilted direction of slicing. Recall that $\sing_{\ge \Theta} u \cap \B_1$ is disjoint with the cylinder $\B^{n-3}_\delta(z) \times \R^3$, in consequence it is also disjoint with a smaller cylinder in direction $E^\perp$. It is easy to see that the rest of the proof remains unchanged. 
\end{proof}

\begin{proof}[Proof of Lemma \ref{lem:transverse}]
First note that for each $F \in G(n,b)$ the set of all $E \in G(n,a)$ non-transversal to $F$ is a finite sum of smooth submanifolds of $G(n,a)$ of positive codimension 
\[
\{ E \in G(n,a) : E \not\pitchfork F \} = \bigcup_{c=1}^{\min(a,b)} \{ E \in G(n,a) : \dim E \cap F = c \},
\]
hence it has zero $\lambda$ measure. Applying Fubini theorem, we get 
\begin{align*}
\int_{G(n,a)} \mu(\{ F \in G(n,b) : E \not\pitchfork F \}) \dd \lambda(E) 
& = \int_{G(n,a)} \int_{G(n,b)} \mathbf{1}_{E \not\pitchfork F} \dd \mu(F) \dd \lambda(E) \\
& = \int_{G(n,l)} \lambda(\{ E \in G(n,a) : E \not\pitchfork F \}) \dd \mu(F) \\
& = 0, 
\end{align*}
so the integrand has to be zero for $\lambda$-a.e. $E \in G(n,a)$. 
\end{proof}

\subsection*{Propagation of $\delta$-flatness to finer scales}

In this section we investigate some important consequences of Definition \ref{df:flatness}. Assuming that an energy minimizing map $u$ is $\delta$-flat in $\B_1$ (with small $\delta > 0$), we shall see that $\sing u$ is actually more flat than a priori assumed (Lemma \ref{lem:fund}), $u$ is also $\delta$-flat in all smaller concentric balls (Corollary \ref{cor:all-scales}), and that $0 \in \sing_\alpha u$ (Corollary \ref{cor:top-stratum}). 

\begin{lem}
\label{lem:fund}
For every $\varepsilon > 0$ there is $\delta_1(\varepsilon) > 0$ such that if $u$ is $\delta_1$-flat in $\B_1$, then $\sing u$ is $\varepsilon$-Reifenberg flat in $\B_1$ and $\| u - \varphi \|_{W^{1,2}(\B_1)} \le \varepsilon$ for some HCM $\varphi$ of homotopy type $\alpha$ with energy density $\Theta$. Moreover, $\sing u$ is $\varepsilon$-Reifenberg flat in $\B_1$ with respect to the $(n-3)$-dimensional plane $S(\varphi)$. 
\end{lem}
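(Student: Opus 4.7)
The plan is a compactness--contradiction argument. First, I would suppose the lemma fails for some $\varepsilon > 0$: there is a sequence of energy minimizers $u_k$ that are $\delta_k$-flat in $\B_1$ of type $\alpha$ with $\delta_k \to 0$ and witness planes $L_k$, yet for each $k$ at least one of the two conclusions fails. The energy bound $\theta_{u_k}(0,1) \le \Theta + \delta_k$ is uniform, so by Luckhaus' strong $H^1$-compactness theorem a subsequence converges strongly in $W^{1,2}(\B_1)$ to an energy minimizer $\varphi$; passing to a further subsequence, $L_k \to L$ in the Grassmannian. The goal is to show that $\varphi$ is a minimal HCM of type $\alpha$ with $S(\varphi) = L$, which will contradict the failure of both conclusions for $u_k$.

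The first task is to identify $\varphi$. Squeezing $\Theta \le \theta_{u_k}(0,0) \le \theta_{u_k}(0,1) \le \Theta + \delta_k$, passing to the $W^{1,2}$-limit, and using monotonicity, one obtains $\theta_\varphi(0, r) \equiv \Theta$ on $(0,1]$; the equality case of the monotonicity formula then forces $\varphi$ to be $0$-homogeneous. Because $\sing u_k \subseteq \B_{1/10}(L_k)$ and $L_k \to L$, strong $W^{1,2}$-convergence combined with $\varepsilon$-regularity upgrades to uniform convergence $u_k \to \varphi$ on compact subsets of $\B_1 \setminus L$; in particular $\sing \varphi \subseteq L$, and since the spheres $L_k^\perp \cap \partial \B_1$ converge in Hausdorff distance to $L^\perp \cap \partial \B_1$ uniformly away from $L$, the homotopy class of $u_k$ on $L_k^\perp \cap \partial \B_1$ transfers to $\varphi|_{L^\perp \cap \partial \B_1}$, which therefore has type $\alpha$. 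The crucial identification step is to apply Lemma \ref{lem:no-drop} to $\varphi$ itself: its hypotheses hold for every small flatness parameter because $\sing \varphi \subseteq L$ exactly, and the conclusion collapses (using that $\pi_L$ is the identity on $L$) to $L \cap \B_1 \subseteq \sing_{\ge \Theta} \varphi$. Hence $\theta_\varphi(p, 0) \ge \Theta$ for every $p \in L \cap \B_1$, extended to all of $L$ by $0$-homogeneity. The maximum-at-origin property for homogeneous minimizers gives the reverse inequality $\theta_\varphi(p, 0) \le \Theta$, so equality holds, and the higher-symmetry clause recalled in Section \ref{ch:tangent-maps} forces $L \subseteq S(\varphi)$; combined with $S(\varphi) \subseteq \sing \varphi \subseteq L$, this yields $S(\varphi) = L$. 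So $\varphi$ is a HCM of type $\alpha$ at energy level exactly $\Theta$.

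It remains to contradict both failure modes for $u_k$. The $W^{1,2}$-closeness $\|u_k - \varphi\|_{W^{1,2}(\B_1)} < \varepsilon$ follows immediately from strong convergence. For the $\varepsilon$-Reifenberg flatness of $\sing u_k$ with respect to $L$, I would split into two inclusions. The first, $\sing u_k \cap \B_1 \subseteq \B_\varepsilon L$, follows from upper semi-continuity of $\theta$ combined with $\varepsilon$-regularity: any $z_k \in \sing u_k$ with $d(z_k, L) > \varepsilon$ accumulates at some $z_\infty \notin L = \sing \varphi$ where $\varphi$ is smooth, forcing $0 = \theta_\varphi(z_\infty, 0) \ge \limsup_{k} \theta_{u_k}(z_k, 0) \ge \varepsilon(n, \cN)$, a contradiction. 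For the reverse inclusion $L \cap \B_1 \subseteq \B_\varepsilon \sing u_k$, I would apply Lemma \ref{lem:no-drop} to each $u_k$ (still $\tfrac{1}{10}$-Reifenberg flat with respect to $L_k$) to produce, for each prescribed $p \in L$ not too close to $\partial \B_1$, a singular point $z_k \in \sing_{\ge \Theta} u_k$ projecting onto $\pi_{L_k}(p) \in L_k$; combined with the first inclusion (which forces $d(z_k, L) \le \varepsilon/2$ eventually) and a finite covering of $L \cap \overline{\B_1}$ by $\varepsilon/2$-balls, this yields $|z_k - p| < \varepsilon$ uniformly in $p$ for large $k$.

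The main obstacle I expect is the identification of $\varphi$ as a HCM of the correct topological type in the second paragraph. This is where the indecomposability of $\alpha$ is exploited (via Lemma \ref{lem:no-drop} applied to $\varphi$), and where the energy density $\Theta$ is pinned down together with the symmetry plane $L$. The rest of the proof reduces to standard compactness and $\varepsilon$-regularity bookkeeping.
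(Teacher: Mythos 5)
Your overall strategy — compactness, contradiction, monotonicity to get a homogeneous limit with density $\Theta$, and Lemma~\ref{lem:no-drop} to identify its symmetry plane and close the Reifenberg inclusions — is the same as the paper's. But there is a genuine gap in the second paragraph: you claim ``$\sing\varphi\subseteq L$ exactly.'' The information available from $\sing u_k\cap\B_1\subseteq\B_{1/10}L_k$ and $L_k\to L$ is only that $u_k\to\varphi$ uniformly on compacts of $\B_1\setminus\overline{\B_{1/10}L}$ (after slightly enlarging the tube), and hence $\sing\varphi\cap\B_1\subseteq\overline{\B_{1/10}L}$. Homogeneity of $\varphi$ does not improve this to $\sing\varphi\subseteq L$: the singular cone could be an $(n-3)$-plane tilted away from $L$ by a small angle, still lying inside the $1/10$-tube on $\B_1$. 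Consequently the ``collapse'' step — interpreting $\pi_L$ as the identity on $\sing_{\ge\Theta}\varphi$ to get $L\subseteq\sing_{\ge\Theta}\varphi$ — is unjustified, and so is $S(\varphi)=L$.

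This matters at the end: you then try to prove $\varepsilon$-Reifenberg flatness of $\sing u_k$ with respect to $L$, but the lemma asks for flatness with respect to $S(\varphi)$, and without $S(\varphi)=L$ these are different statements. The paper avoids this entirely: it applies Lemma~\ref{lem:no-drop} to each $u_k$ with plane $L$, passes the resulting Reifenberg flatness of $\sing_{\ge\Theta}u_k$ to the limit via upper semicontinuity of $\theta_\cdot(\cdot,0)$ to conclude that $\sing_{\ge\Theta}\varphi=S(\varphi)$ is not contained in any $(n-4)$-plane, hence $\dim S(\varphi)=n-3$, and then, for large $k$, proves $\sing u_k\subseteq\B_\varepsilon S(\varphi)$ (upper semicontinuity plus $\varepsilon$-regularity) and applies Lemma~\ref{lem:no-drop} once more with plane $S(\varphi)$ — never claiming $S(\varphi)=L$. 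Your proof can be repaired along the same lines: apply Lemma~\ref{lem:no-drop} to $\varphi$ with the honest tube bound $\sing\varphi\subseteq\overline{\B_{1/10}L}$ to get $L\cap\B_{9/10}\subseteq\pi_L(S(\varphi))$, deduce $\dim S(\varphi)=n-3$ from the fact that a linear subspace whose $\pi_L$-image contains an $(n-3)$-ball has dimension at least $n-3$, and then carry out your final paragraph with $S(\varphi)$ replacing $L$ as the reference plane (transferring the homotopy type of $u_k$ from $L_k^\perp\cap\partial\B_1$ to $S(\varphi)^\perp\cap\partial\B_1$ through the region where all the maps are regular).
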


\begin{proof}
We employ the usual contradiction argument. Let $u_k$ be a sequence of minimizing harmonic maps such that $u_k$ is $1/k$-flat in $\B_1$, with $\sing u$ $\tfrac{1}{10}$-Reifenberg flat with respect to a fixed plane $L$. 
Choosing a subsequence, we have $u_k \to \varphi$ in $W^{1,2}(\B_1)$ for some energy minimizing $\varphi$. By condition \ref{df:flatness-energy} in Definition \ref{df:flatness}, $\varphi$ is homogeneous with energy density $\Theta$. By Lemma \ref{lem:no-drop}, for each $k$ the set $\sing_{\ge \Theta} u_k$ is $\frac{1}{10}$-Reifenberg flat in $\B_1$ with respect to $L$. Taking the limit and exploiting the upper semicontinuity of $\theta_\cdot(\cdot,0)$ with respect to both the map and the point, we conclude that the set 
\[
S(\varphi) \equiv \sing_{\ge \Theta} \varphi
\]
is not contained in any $(n-4)$-dimensional plane. On the other hand, it is itself a~linear subspace of dimension at most $n-3$, so we learn that $\varphi$ is a HCM of homotopy type $\alpha$ (by uniform convergence away from $L$). For large enough $k$, $u_k$ is $\varepsilon$-close to $\varphi$ in $W^{1,2}(\B_1)$ and its singular set is contained in $\B_\varepsilon S(\varphi)$ (this is a~consequence of upper semicontinuity of $\theta_\cdot(\cdot,0)$ and $\varepsilon$-regularity \eqref{eq:eps-regularity}), which finishes the proof by another application of Lemma \ref{lem:no-drop}. 
\end{proof}

\begin{cor}
\label{cor:all-scales}
If $\delta \le \delta_1(\frac{1}{20})$ and $u$ is $\delta$-flat in $\B_1$, then $u$ is also $\delta$-flat in any smaller ball $\B_r$ centered at $0$ with $0 < r \le 1$. 
\end{cor}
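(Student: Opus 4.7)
The plan is to verify the two conditions of Definition~\ref{df:flatness} at scale $r$ separately. Condition~\ref{df:flatness-energy} transfers trivially: by the monotonicity recalled in Section~\ref{ch:preliminaries}, $\theta_u(0,0) \ge \Theta$ and $\theta_u(0,r) \le \theta_u(0,1) \le \Theta + \delta$ for every $r \le 1$. All the work lies in condition~\ref{df:flatness-Reifenberg}, which I would treat by a single-scale reduction from $\B_1$ to $\B_r$ with $r \in [1/2, 1]$, followed by a rescaling iteration that exhausts all smaller scales.

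For the single-scale reduction, I would invoke Lemma~\ref{lem:fund} with $\varepsilon = 1/20$: since $\delta \le \delta_1(1/20)$, there is a HCM $\varphi$ of type $\alpha$ with $\theta_\varphi(0) = \Theta$ such that $\sing u$ is $\tfrac{1}{20}$-Reifenberg flat in $\B_1$ with respect to $L := S(\varphi)$ and $\|u - \varphi\|_{W^{1,2}(\B_1)} \le \tfrac{1}{20}$. For $r \in [1/2,1]$ I take the same plane $L_r = L$ and check the two inclusions: $\sing u \cap \B_r \subseteq \B_{1/20} L \subseteq \B_{r/10} L$ and $L \cap \B_r \subseteq \B_{1/20} \sing u \subseteq \B_{r/10} \sing u$, both valid because $\tfrac{1}{20} \le \tfrac{r}{10}$ when $r \ge 1/2$. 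For the homotopy type, the $2$-sphere $L^\perp \cap \partial \B_r$ lies at distance $r > \tfrac{1}{20}$ from $L$ and hence avoids $\B_{1/20} L \supseteq \sing u \cap \B_1$; thus $u$ is smooth on the shell $\{y \in \B_1 : \dist(y,L) > \tfrac{1}{20}\}$, and the restrictions $u|_{L^\perp \cap \partial \B_r}$ for $r \in [1/2,1]$ are continuously related, so they share the homotopy type $\alpha$ given by hypothesis at $r=1$.

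Having established $\delta$-flatness of $u$ in $\B_r$ for $r \in [1/2,1]$, in particular in $\B_{1/2}$, I would iterate by rescaling. The map $u_{1/2}(y) := u(y/2)$ is an energy minimizer that is $\delta$-flat in $\B_1$ (the singular set scales by $2$, the energy densities match, and the homotopy type at $\partial \B_1$ for $u_{1/2}$ equals that at $\partial \B_{1/2}$ for $u$), so the single-scale step applied to $u_{1/2}$ produces $\delta$-flatness of $u$ in $\B_s$ for $s \in [1/4, 1/2]$. Continuing inductively with $u_{2^{-k}}$ gives $\delta$-flatness of $u$ in $\B_s$ for $s \in [2^{-k-1}, 2^{-k}]$, and the union over $k \ge 0$ is $(0,1]$.

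The main point to appreciate is the factor-of-$2$ slack hidden in the choice of constants: Lemma~\ref{lem:fund} is invoked with $\varepsilon = 1/20$, twice as strong as the $\tfrac{1}{10}$ required by Definition~\ref{df:flatness}, and this is precisely what lets a single application cover the dyadic range $r \in [1/2, 1]$. Without this margin the inclusion $\B_{\varepsilon} L \subseteq \B_{r/10} L$ would fail already for $r$ slightly below~$1$, and no rescaling iteration could begin. Everything else --- the scaling invariance of $\delta$-flatness and the continuity of the homotopy type in the smooth shell --- is essentially formal once the inclusions are pinned down.
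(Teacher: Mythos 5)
Your proof is correct and follows essentially the same route as the paper: condition (1) is trivial by monotonicity, and for condition (2) one applies Lemma~\ref{lem:fund} with $\varepsilon = 1/20$ to gain the factor-of-two slack that covers the dyadic range $r \in [1/2,1]$, then iterates by rescaling. The paper's own proof is identical in structure, just more terse; the one place where you could be slightly more explicit (and the paper is equally silent) is the transfer of the homotopy-type condition from the plane $L_0$ given by the hypothesis at scale $1$ to the new plane $L = S(\varphi)$ supplied by Lemma~\ref{lem:fund} --- this uses that $L_0$ and $L$ are close in $G(n,n-3)$, so the $2$-spheres $L_0^\perp \cap \partial \B_1$ and $L^\perp \cap \partial \B_1$ both lie in the smooth shell and can be deformed into one another there.
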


\begin{proof}
Condition \ref{df:flatness-energy} of Definition \ref{df:flatness} is trivially satisfied. As for condition \ref{df:flatness-Reifenberg}, it follows from Lemma \ref{lem:fund} that $\sing u$ is $\frac{1}{20}$-Reifenberg flat in $\B_1$, hence $\frac{1}{10}$-flat in any ball $\B_r$ with $\frac 12 \le r \le 1$. In consequence, $u$ is $\delta$-flat in each of these balls. Then the claim follows by iteration of Lemma \ref{lem:fund} rescaled to smaller and smaller balls. 
\end{proof}

\begin{cor}
\label{cor:top-stratum}
If $\delta \le \delta_1(\frac{1}{20})$ and $u$ is $\delta$-flat in $\B_1$, then every tangent map to $u$ at $0$ is a HCM of type $\alpha$. In particular, $0 \in \sing_{\alpha} u$. 
\end{cor}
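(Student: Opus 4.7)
My plan is to combine Corollary \ref{cor:all-scales} with Lemma \ref{lem:fund} applied at every scale, and then pass to a limit to identify any tangent map of $u$ at $0$ with a HCM of type $\alpha$.

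First, by Corollary \ref{cor:all-scales} the rescaled map $u_r(y) := u(r y)$ is $\delta$-flat in $\B_1$ for every $0 < r \le 1$. Let $\varphi$ be any tangent map of $u$ at $0$, so that $u_{r_j} \to \varphi$ in $W^{1,2}_{\loc}(\R^n)$ for some $r_j \to 0^+$. Applying Lemma \ref{lem:fund} to each $u_{r_j}$ with $\varepsilon = \tfrac{1}{20}$ produces a HCM $\psi_j$ of type $\alpha$ with energy density $\Theta$ such that $\|u_{r_j} - \psi_j\|_{W^{1,2}(\B_1)} \le \tfrac{1}{20}$ and such that both $\sing u_{r_j}$ and $\sing_{\ge \Theta} u_{r_j}$ (the latter by the instance of Lemma \ref{lem:no-drop} built into the proof of Lemma \ref{lem:fund}) are $\tfrac{1}{20}$-Reifenberg flat in $\B_1$ with respect to $L_j := S(\psi_j)$. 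Using compactness of HCMs of uniformly bounded energy and compactness of the Grassmannian $G(n, n-3)$, I pass to a subsequence so that $\psi_j \to \psi$ in $W^{1,2}_{\loc}$ and $L_j \to L := S(\psi)$, where $\psi$ is a HCM of type $\alpha$ and density $\Theta$. Taking the limit yields $\|\varphi - \psi\|_{W^{1,2}(\B_1)} \le \tfrac{1}{20}$, $\sing \varphi \cap \B_1 \subseteq \B_{1/20} L$ (by upper semicontinuity of the energy density combined with the $\varepsilon$-regularity \eqref{eq:eps-regularity}), and $\sing_{\ge \Theta} \varphi \cap \B_1$ is $\tfrac{1}{20}$-Reifenberg flat with respect to $L$. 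Moreover, since each $u_{r_j}$ restricted to $L_j^\perp \cap \partial \B_1$ has homotopy type $\alpha$ (Definition \ref{df:flatness}), and since the $2$-sphere $L^\perp \cap \partial \B_1$ is disjoint from $\sing \varphi$, the uniform convergence $u_{r_j} \to \varphi$ on this sphere together with $L_j \to L$ and the retraction-based homotopy argument preceding Lemma \ref{lem:no-drop} forces $\varphi|_{L^\perp \cap \partial \B_1}$ to have homotopy type $\alpha$ as well.

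To conclude, I would verify that $\varphi$ itself is $\delta$-flat in $\B_1$: condition (\ref{df:flatness-energy}) holds because $\theta_\varphi(0, 0) = \theta_\varphi(0, r) = \theta_u(0, 0) \in [\Theta, \Theta + \delta]$ by homogeneity of $\varphi$, and condition (\ref{df:flatness-Reifenberg}) follows from the Reifenberg flatness of $\sing \varphi$ and the boundary homotopy type just identified. By homogeneity of $\varphi$, the same Reifenberg-flatness conclusions hold at every scale around $0$, so reprising the dimension-counting argument from the proof of Lemma \ref{lem:fund} identifies $S(\varphi)$ as an $(n-3)$-dimensional linear subspace — necessarily equal to $L$ — and $\varphi$ as a HCM of type $\alpha$. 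The main obstacle is that a priori $\theta_\varphi(0, 0)$ only lies in $[\Theta, \Theta + \delta]$ rather than equalling $\Theta$ exactly, so the automatic equality $\sing_{\ge \Theta} \varphi = S(\varphi)$ used in Lemma \ref{lem:fund}'s proof does not apply verbatim; resolving this requires combining the scale-invariant Reifenberg flatness of $\sing_{\ge \Theta} \varphi$ (available by homogeneity) with the monotonicity formula to promote the lower bound $\theta_\varphi(y,0) \ge \Theta$ on points of $L$ to the equality $\theta_\varphi(y,0) = \theta_\varphi(0,0)$ characterising membership in $S(\varphi)$, after which the dimension bound $\dim S(\varphi) \le \dim \sing \varphi \le n-3$ closes the argument.
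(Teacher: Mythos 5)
Your proposal follows the same route as the paper: pass to a tangent map $\varphi = \lim u_{r_j}$, invoke Corollary \ref{cor:all-scales} to get $\delta$-flatness of each $u_{r_j}$ in $\B_1$, extract a convergent subsequence of approximating planes $L_j \to L$ in $G(n,n-3)$, and take the limit in Lemma \ref{lem:no-drop} / Lemma \ref{lem:fund} to conclude. The paper's own proof is extremely terse (``the claim follows from Lemma \ref{lem:no-drop} as in the proof of Lemma \ref{lem:fund}; the only difference is that the planes $L_k$ may change''), and you supply the omitted details (construction of the approximating HCM $\psi$ by compactness, verification that $\varphi$ itself is $\delta$-flat in $\B_1$, preservation of the boundary homotopy type under uniform convergence). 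This is all consistent with what the paper intends.

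The most interesting part of your proposal is the final paragraph: you correctly notice that the phrase ``as in the proof of Lemma \ref{lem:fund}'' hides a genuine asymmetry. In Lemma \ref{lem:fund} the sequence is $1/k$-flat, so the limit $\varphi$ has $\theta_\varphi(0,0) = \Theta$ exactly, and therefore $\sing_{\ge\Theta}\varphi = S(\varphi)$ is automatically a linear subspace; the conclusion that it is $\frac{1}{10}$-Reifenberg flat with respect to $L$ then forces $\dim S(\varphi) = n-3$. In Corollary \ref{cor:top-stratum} the sequence is only $\delta$-flat with $\delta$ fixed, so a priori $\theta_\varphi(0,0) \in [\Theta, \Theta+\delta]$ and $\sing_{\ge\Theta}\varphi$ is merely a closed cone, possibly strictly larger than $S(\varphi)$. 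Flagging this is exactly right, and it is a point the paper does not make explicit. However, the fix you sketch is too vague to assess and contains an inaccuracy: you speak of ``the lower bound $\theta_\varphi(y,0)\ge\Theta$ on points of $L$,'' but Lemma \ref{lem:no-drop} only yields that $L\cap\B_{1-\varepsilon}$ is contained in the \emph{projection} $\pi_L(\sing_{\ge\Theta}\varphi\cap\B_1)$; it does not give the density bound at points of $L$ itself. It is also not explained how ``the monotonicity formula'' promotes $\theta_\varphi(y,0)\ge\Theta$ to the equality $\theta_\varphi(y,0)=\theta_\varphi(0,0)$ characterizing $S(\varphi)$ — monotonicity gives $\theta_\varphi(y,0)\le\theta_\varphi(0,0)$ for homogeneous $\varphi$, but the converse inequality requires a different input. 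So: same approach as the paper, with a sharp and welcome observation about a step the paper treats implicitly, but the proposed repair of that step is not yet a proof.
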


\begin{proof}
Let $\varphi$ be any tangent map to $u$ at $0$, i.e. a $W^{1,2}(\B_1)$-limit of rescaled functions $u_k(x) = u(r_k x)$ for some sequence $r_k \to 0$; any thus obtained $\varphi$ is homogeneous. By Corollary \ref{cor:all-scales}, each $u_k$ is $\delta$-flat in $\B_1$ (with $\sing u_k$ $\tfrac{1}{10}$-Reifenberg flat with respect to some $L_k$), so the claim follows from Lemma \ref{lem:no-drop} as in the proof of Lemma \ref{lem:fund}. The only difference is that the planes $L_k$ may change, but without loss of generality $L_k \to L$ in $G(n,n-3)$, which is enough to conclude that $\sing_{\ge \Theta} \varphi$ spans an $(n-3)$-dimensional plane. 
\end{proof}

\subsection*{Moving the ball center}

\begin{prop}
\label{prop:all-balls}
For every $\varepsilon > 0$ there is $\delta_2(\varepsilon) > 0$ such that if $u$ is $\delta_2$-flat in $\B_2$, then $u$ is $\delta_1(\varepsilon)$-flat in each of the balls $\B_r(z)$ with $z \in \sing_{\ge \Theta} u \cap \B_1$ and $0 < r \le 1$. Moreover, the sets $\sing_\alpha u$ and $\sing_{\ge \Theta} u$ restricted to the ball $\B_1$ coincide. 
\end{prop}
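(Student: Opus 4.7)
My plan is a compactness-and-contradiction argument, combined with a reduction to a single scale. First I observe that, applying Corollary \ref{cor:all-scales} to the translated map $\tilde u(y) := u(z+y)$, it suffices to establish $\delta_1(\varepsilon)$-flatness of $u$ at the single scale $r=1$ (i.e.\ in $\B_1(z)$) for every $z \in \sing_{\ge \Theta} u \cap \B_1$; the smaller concentric balls $\B_r(z)$ with $0 < r \le 1$ are then handled automatically. Since weakening $\varepsilon$ only weakens the conclusion, I may moreover assume $\varepsilon \le \tfrac{1}{20}$ from the outset, so that $\delta_1(\varepsilon) \le \delta_1(\tfrac{1}{20})$ and Corollary \ref{cor:all-scales} indeed applies.

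I would then suppose the reduced statement fails, obtaining a sequence of energy minimizers $u_k$ that are $\tfrac{1}{k}$-flat in $\B_2$ together with points $z_k \in \sing_{\ge \Theta} u_k \cap \B_1$ at which $u_k$ fails to be $\delta_1(\varepsilon)$-flat in $\B_1(z_k)$. By Lemma \ref{lem:fund} (rescaled from $\B_1$ to $\B_2$), a subsequence satisfies $u_k \to \varphi$ strongly in $W^{1,2}(\B_2)$, where $\varphi$ is a HCM of type $\alpha$ with density $\Theta$ and symmetry plane $L := S(\varphi)$, and $\sing u_k \cap \B_2 \subset \B_{\varepsilon_k'} L$ for some $\varepsilon_k' \to 0$. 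In particular $\dist(z_k, L) \le \varepsilon_k'$, so after a further extraction $z_k \to z \in L \cap \overline{\B_1}$.

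Next I would verify both conditions of Definition \ref{df:flatness} for $u_k$ in $\B_1(z_k)$ for large $k$, reaching a contradiction. The energy bound follows from $\theta_{u_k}(z_k, 0) \ge \Theta$ and $\theta_{u_k}(z_k, 1) \to \theta_\varphi(z, 1) = \Theta$, using $W^{1,2}$ convergence and translation-invariance of $\varphi$ along $L \ni z$. For the Reifenberg condition I would take $L_k := L + z_k - \pi_L z_k$, the affine $(n-3)$-plane through $z_k$ parallel to $L$; the upper inclusion $\sing u_k \cap \B_1(z_k) \subset \B_{2\varepsilon_k'}(L_k)$ is immediate from $\sing u_k \cap \B_2 \subset \B_{\varepsilon_k'} L$ together with $\dist(L_k, L) \le \varepsilon_k'$, while the lower inclusion $L_k \cap \B_1(z_k) \subset \B_{o(1)}(\sing u_k)$ would be deduced from Lemma \ref{lem:no-drop} applied on $\B_2$, by pushing each $w \in L_k \cap \B_1(z_k)$ to $\pi_L w \in L \cap \overline{\B_2}$ and lifting through $\pi_L$ to a nearby point of $\sing_{\ge \Theta} u_k$. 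Finally the $2$-sphere $(L^\perp + z_k) \cap \partial \B_1(z_k)$ stays at distance $\ge 1 - \varepsilon_k'$ from $\sing \varphi = L$, so $u_k \to \varphi$ uniformly on it (by the second compactness theorem of Section \ref{ch:preliminaries}) and thus inherits homotopy type $\alpha$.

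For the ``Moreover'' clause, the inclusion $\sing_\alpha u \cap \B_1 \subseteq \sing_{\ge \Theta} u \cap \B_1$ is immediate from the definition of $\Theta(\alpha)$; for the reverse, given $z \in \sing_{\ge \Theta} u \cap \B_1$, the flatness of $\tilde u$ in $\B_1$ just established allows Corollary \ref{cor:top-stratum} to be applied at $z$, forcing every tangent map of $u$ at $z$ to be a HCM of type $\alpha$. I expect the chief obstacle to lie in the lower Reifenberg inclusion after shifting the base point from the origin to $z_k$: ruling out small ``gaps'' in $\sing_{\ge \Theta} u_k$ around $z_k$ is exactly the content of Lemma \ref{lem:no-drop}, whose proof in turn rests on the Naber--Valtorta rectifiability theorem; without this lower bound the singular sets $\sing u_k$ could degenerate along a lower-dimensional subset of $L$ even as $u_k \to \varphi$ in $W^{1,2}$.
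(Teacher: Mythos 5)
Your proposal is correct, and it rests on the same two pillars as the paper's proof — Lemma~\ref{lem:fund} for the approximation $u \approx \varphi$ in $\B_2$ (hence closeness of $\sing u$ to $L = S(\varphi)$), and translation invariance of $\varphi$ along $L$ for the energy bound centred at $z$ — but you package it as compactness-plus-contradiction where the paper argues directly and quantitatively. Concretely, the paper sets $\delta_2 := \min(\delta_1(\varepsilon), \delta_1(\eta/2))$ for a small $\eta$ chosen in terms of $\delta_1(\varepsilon)$, and then estimates, for $z \in \B_1 \cap \B_\eta L$ with $z' := \pi_L(z)$,
\[
\int_{\B_1(z)} |\nabla u|^2 \;\le\; \int_{\B_1(z)} |\nabla\varphi|^2 + C\eta \;\le\; \int_{\B_{1+\eta}(z')} |\nabla\varphi|^2 + C\eta \;=\; (1+\eta)^{n-2}\Theta + C\eta,
\]
which gives $\theta_u(z,1) \le \Theta + \delta_1(\varepsilon)$ once $\eta$ is small enough. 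This is exactly the step your contradiction argument replaces with the softer ``strong $W^{1,2}$-convergence implies $\theta_{u_k}(z_k,1) \to \theta_\varphi(z,1) = \Theta$''. The direct route pins down $\delta_2$ explicitly and avoids subsequence extraction; it also sidesteps the near-boundary technicality you flag in the lower Reifenberg inclusion, since the paper only needs to use $L \cap \B_2 \subset \B_\eta(\sing u)$ (an output of Lemma~\ref{lem:fund}) plus the elementary observation that for $z \in \B_\eta L \cap \B_1$ and $w \in L_z \cap \B_1(z)$ one has $\pi_L(w) \in L \cap \B_2$, with no need to re-invoke Lemma~\ref{lem:no-drop}. (Your version, applying Lemma~\ref{lem:no-drop} directly on $\B_2$, only covers $L \cap \B_{2-\varepsilon_k'}$ under the projection, so points $w \in L_k \cap \B_1(z_k)$ with $|\pi_L(w)|$ close to $2$ need a small extra push inward by $O(\varepsilon_k')$; this is harmless because the target Reifenberg constant $\tfrac1{10}$ is fixed, but it is an extra step.) On the other hand, for condition~\ref{df:flatness-Reifenberg} the paper is rather terse (``satisfied by our assumptions''), and your explicit treatment — the shifted plane $L_k$ through $z_k$, the two inclusions, and the homotopy type of the slice via uniform convergence away from $\sing\varphi$ — actually makes that step more transparent. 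The ``Moreover'' clause is handled identically in both proofs via Corollary~\ref{cor:top-stratum}.
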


\begin{proof}
Choose $\delta_2 := \min(\delta_1(\varepsilon),\delta_1(\eta/2))$ according to Lemma \ref{lem:fund}, where $\eta>0$ is to be fixed in a moment. Applying Lemma \ref{lem:fund} rescaled to the ball $\B_2$, denote by $\varphi$ the approximating HCM and let $L = S(\varphi)$. To obtain the first claim, we first show that $\theta_u(z,1) \le \Theta + \delta_1(\varepsilon)$ for each $z \in \B_1 \cap \B_{\eta} L$. First, 
\[
\int_{\B_1(z)} |\nabla u|^2 \le \int_{\B_1(z)} |\nabla \varphi|^2 + C \eta^2,  
\]
by Lemma \ref{lem:fund}. If $z' = \pi_L(z)$, then $|z-z'| < \eta$ and 
\[
\int_{\B_1(z)} |\nabla \varphi|^2 \le \int_{\B_{1+\eta}(z')} |\nabla \varphi|^2 = (1+\eta)^{n-2} \Theta
\]
by $L$-invariance of $\varphi$ in $z'$-direction. If $\eta$ is chosen small enough (depending on $\delta_1(\varepsilon)$), we obtain $\theta_u(z,1) \le \Theta + \delta_1(\varepsilon)$. 

\medskip

Since each point $z \in \sing_{\ge \Theta} u \cap \B_1$ lies in $\B_\eta L$ by Lemma \ref{lem:fund}, the above reasoning shows that condition \ref{df:flatness-energy} of Definition \ref{df:flatness} holds for the ball $\B_1(z)$. Condition \ref{df:flatness-Reifenberg} is satisfied by our assumptions, this ball is $\delta_1(\varepsilon)$-flat. Then Corollary \ref{cor:all-scales} implies $\delta_1(\varepsilon)$-flatness of $u$ also in all smaller balls $\B_r(z)$. 

By Corollary \ref{cor:top-stratum} we now have $z \in \sing_\alpha u$ for each $z \in \sing_{\ge \Theta} \cap \B_1$. The inverse inclusion $\sing_\alpha u \subseteq \sing_{\ge \Theta} u$ is evident from the definition of $\Theta(\alpha)$. 
\end{proof}

\begin{cor}
\label{cor:only-top}
Under the assumptions of Proposition \ref{prop:all-balls}, the whole singular set $\sing u$ restricted to the ball $\B_{1/2}$ coincides with $\sing_{\ge \Theta} u$ (and hence with $\sing_\alpha u$). 
\end{cor}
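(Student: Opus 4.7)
The plan is to establish the nontrivial inclusion $\sing u \cap \B_{1/2} \subseteq \sing_{\ge \Theta} u$; the reverse is definitional. Upper semicontinuity of $\theta_u(\cdot,0)$ makes $\sing_{\ge \Theta} u$ closed, so it suffices, for a fixed $z \in \sing u \cap \B_{1/2}$, to construct a sequence $z_k \in \sing_{\ge \Theta} u \cap \B_1$ with $z_k \to z$. I would build such a sequence by iterating Lemma~\ref{lem:no-drop} on finer and finer scales around $z$, exploiting that Proposition~\ref{prop:all-balls} delivers $\delta_1(\varepsilon)$-flatness in \emph{every} ball $\B_r(z_*)$ with $z_* \in \sing_{\ge \Theta} u \cap \B_1$ and $0<r\le 1$.

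For the initial step, Lemma~\ref{lem:fund} applied to $\B_2$ (legitimate by the hypothesis, after shrinking $\delta_2$ if necessary) yields an $(n-3)$-plane $L$ through $0$ with $\sing u \cap \B_2 \subseteq \B_{2\varepsilon} L$, so in particular $|z-\pi_L z|\le 2\varepsilon$. Lemma~\ref{lem:no-drop} rescaled to $\B_2$ then produces $z_0 \in \sing_{\ge \Theta} u \cap \B_2$ with $\pi_L(z_0)=\pi_L(z)$, and a triangle inequality gives $|z_0-z|\le 4\varepsilon$. For the inductive step, given $z_k \in \sing_{\ge \Theta} u \cap \B_1$ with $d_k := |z-z_k| \le 4\varepsilon$, I set $r_{k+1}:=2 d_k$ so that $z$ lies in the open half-ball $\B_{r_{k+1}/2}(z_k)$. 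Proposition~\ref{prop:all-balls} then gives $\delta_1(\varepsilon)$-flatness of $u$ on $\B_{r_{k+1}}(z_k)$; Lemma~\ref{lem:fund} furnishes a plane $L_k$ through $z_k$ with $\sing u \cap \B_{r_{k+1}}(z_k) \subseteq \B_{r_{k+1}\varepsilon}L_k$; and Lemma~\ref{lem:no-drop} rescaled to this ball produces $z_{k+1} \in \sing_{\ge \Theta} u \cap \B_{r_{k+1}}(z_k)$ with $\pi_{L_k}(z_{k+1})=\pi_{L_k}(z)$. A triangle inequality bounds $d_{k+1}\le 2 r_{k+1}\varepsilon = 4\varepsilon\, d_k$.

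Fixing $\varepsilon < 1/8$ once and for all (so that $\delta_2=\delta_2(\varepsilon)$ is taken correspondingly small), the recursion $d_{k+1}\le 4\varepsilon\, d_k$ is a geometric contraction with ratio $<1/2$, hence $z_k\to z$, and closedness of $\sing_{\ge \Theta} u$ concludes the proof. The only real obstacle is the bookkeeping that keeps the iteration inside the admissible region: at each stage one must check $z_k \in \B_1$ (so Proposition~\ref{prop:all-balls} applies at $z_k$) and $r_{k+1}\le 1$ (so the rescaled lemmas apply to $\B_{r_{k+1}}(z_k)$ without further adjustment). Both follow from $z\in \B_{1/2}$ and the uniform bound $d_k\le d_0 \le 4\varepsilon$, which give $|z_k|\le 1/2 + 4\varepsilon < 1$ and $r_{k+1}\le 8\varepsilon < 1$.
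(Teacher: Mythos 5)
Your iteration is correct in outline and lands the result, but it reorganizes the paper's argument rather than finding a genuinely different route. The paper runs a one-step contradiction: for $p \in \sing u \cap \B_{1/2}$ not in $\sing_{\ge\Theta}u$, take the \emph{closest} point $z \in \sing_{\ge\Theta}u$ (available since the set is closed and contains $0$), set $r = 2|p-z|$, and apply Lemma~\ref{lem:no-drop} once in $\B_r(z)$ -- legitimate by Proposition~\ref{prop:all-balls} -- to produce $z' \in \sing_{\ge\Theta}u$ with $|z'-p| \le \tfrac{r}{10} + \tfrac{r}{10} < |p-z|$, contradicting minimality of $z$. Your version performs the same elementary step but without the closest-point device: starting from an arbitrary nearby $z_0$, it re-runs the step at scale $r_{k+1}=2|z-z_k|$ and uses the geometric contraction $d_{k+1}\le 4\varepsilon d_k$ together with closedness of $\sing_{\ge\Theta}u$. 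Both are valid; the closest-point trick is marginally shorter because it dispenses with the final bookkeeping ($z_k\in\B_1$, $r_{k+1}\le 1$, uniform contraction ratio) that you have to carry.

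There is one small but real gap in the inductive step. You apply Lemma~\ref{lem:no-drop} in $\B_{r_{k+1}}(z_k)$ with respect to the plane $L_k = S(\varphi_k)$ furnished by Lemma~\ref{lem:fund}. But Lemma~\ref{lem:no-drop} also requires that $u$ restricted to $(z_k+L_k^\perp)\cap\partial\B_{r_{k+1}}(z_k)$ have homotopy type $\alpha$, and the \emph{statement} of Lemma~\ref{lem:fund} only asserts Reifenberg flatness with respect to $S(\varphi_k)$, not this homotopy condition. The condition does in fact hold -- it is established inside the proof of Lemma~\ref{lem:fund} via the uniform convergence of the rescaled maps to $\varphi_k$ away from $S(\varphi_k)$ -- but you should flag that you are relying on the proof rather than the statement. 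A cleaner fix, and the one implicit in the paper's argument, is to apply Lemma~\ref{lem:no-drop} directly with the plane $L$ given by condition~(2) of Definition~\ref{df:flatness}, for which the homotopy hypothesis is built in; the Reifenberg constant is then $\tfrac{1}{10}$ rather than $\varepsilon$, yielding contraction factor $4\cdot\tfrac{1}{10}=\tfrac{2}{5}<1$, which still suffices and decouples the iteration geometry from the choice of $\varepsilon$.
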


\begin{proof}
Assume that the ball $\B_{1/2}$ contains a point $p \in \sing u \setminus \sing_{\ge \Theta} u$. We may choose a point $z \in \sing_{\ge \Theta} u$ closest to $p$ (as it is a closed set) and set $r = 2|p-z|$. Clearly $z \in \B_1$ and $0 < r \le 1$, so $u$ is $\delta_1(\varepsilon)$-flat in $\B_r(z)$. Choose $L = L(z,r)$ according to Definition \ref{df:set-flatness}. Then by Lemma \ref{lem:no-drop} there is a point $z' \in \sing_{\ge \Theta} u \cap \B_r(z)$ such that $\pi_L(z')=\pi_L(p)$. Since both $|\pi_L(p)-p|$ and $|\pi_L(z')-z|$ are less than $\tfrac{r}{10}$, the triangle inequality yields a~contradiction with minimality of $z$. 
\end{proof}

In order to apply the above results, one needs to know that $u$ is $\delta$-flat in at least one ball. 

\begin{lem}
\label{lem:some-flatness}
Let $\delta > 0$. If $0 \in \sing_\alpha u$ and $\theta_u(0,0) < \Theta + \delta$, then there is $r > 0$ such that $u$ is $\delta$-flat in $\B_r$. 
\end{lem}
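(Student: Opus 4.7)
The plan is to extract the radius $r$ from a tangent-map blow-up at the origin. Since $0 \in \sing_\alpha u$, by definition there is a sequence $r_j \to 0$ such that the rescalings $u_{r_j}(y) := u(r_j y)$ converge in $W^{1,2}_\loc(\R^n)$ to a HCM $\varphi$ of type $\alpha$; let $L := S(\varphi)$, an $(n-3)$-dimensional linear subspace through $0$, and recall that $\sing \varphi = L$ because $\dim S(\varphi) = n-3$. Since $u$ is $\delta$-flat in $\B_{r_j}$ if and only if $u_{r_j}$ is $\delta$-flat in $\B_1$, it will suffice to verify the two conditions of Definition \ref{df:flatness} for $u_{r_j}$ on $\B_1$ for some sufficiently large $j$.

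The energy condition \ref{df:flatness-energy} will come cheaply: monotonicity of $r \mapsto \theta_u(0,r)$ combined with the hypothesis $\theta_u(0,0) < \Theta + \delta$ yields $\Theta \le \theta_u(0,0) \le \theta_u(0,r) \le \Theta + \delta$ for all sufficiently small $r$, where the lower bound is forced by $0 \in \sing_\alpha u$ and the definition of $\Theta(\alpha)$.

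Condition \ref{df:flatness-Reifenberg} splits into the homotopy-type assertion on $L^\perp \cap \partial \B_1$ and Reifenberg flatness of $\sing u_{r_j}$ with respect to $L$. The homotopy-type part is immediate from the Schoen--Uhlenbeck uniform convergence theorem recalled in the preliminaries: $u_{r_j} \to \varphi$ uniformly on every compact subset of $\R^n \setminus L$, so on $L^\perp \cap \partial \B_1$ the maps $u_{r_j}$ and $\varphi$ are sup-norm close and hence homotopic through the tubular-neighbourhood retraction for large $j$. The first Reifenberg inclusion $\sing u_{r_j} \cap \B_1 \subseteq \B_{1/20} L$ is again standard: on the compact set $\overline{\B_1} \setminus \B_{1/20} L$ the density of $\varphi$ vanishes, and by $\varepsilon$-regularity \eqref{eq:eps-regularity} together with upper semicontinuity of $\theta_{(\cdot)}(\cdot,\cdot)$ in both the map and the base point, the density of $u_{r_j}$ likewise falls below the regularity threshold there for large $j$.

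The one genuinely delicate step is the reverse Reifenberg inclusion $L \cap \B_1 \subseteq \B_{1/10}(\sing u_{r_j})$, which rules out gaps in $\sing u$. This is exactly the role Lemma \ref{lem:no-drop} was designed to play: once the hypotheses of the preceding paragraph are in place, Lemma \ref{lem:no-drop} applied with $\varepsilon = 1/20$ delivers $L \cap \B_{19/20} \subseteq \pi_L(\sing_{\ge \Theta} u_{r_j} \cap \B_1) \subseteq \B_{1/20}(\sing u_{r_j})$, and a trivial radial correction near $\partial \B_1$ enlarges this from $L \cap \B_{19/20}$ to $L \cap \B_1$ at the cost of replacing $1/20$ by $1/10$. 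Choosing $j$ large enough for all these properties to hold simultaneously and setting $r := r_j$ completes the argument. The main obstacle would have been excluding the gap phenomenon, but Lemma \ref{lem:no-drop} has already taken care of it.
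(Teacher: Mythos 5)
Your argument is correct and follows essentially the same route as the paper's proof: blow up along a sequence $r_j \to 0$ to a HCM $\varphi$ of type $\alpha$, observe that condition \ref{df:flatness-energy} is automatic for small $r$ by monotonicity and the density hypothesis, obtain the inclusion $\sing u_{r_j} \cap \B_1 \subseteq \B_{\varepsilon} S(\varphi)$ and the homotopy type on $S(\varphi)^\perp \cap \partial \B_1$ from $\varepsilon$-regularity and uniform convergence off $S(\varphi)$, and invoke Lemma \ref{lem:no-drop} for the reverse Reifenberg inclusion. Your treatment is slightly more careful about the constants (the explicit radial correction from $\B_{19/20}$ to $\B_1$), but there is no substantive difference from the paper.
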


\begin{proof}
Note that condition \ref{df:flatness-energy} of Definition \ref{df:flatness} is trivially satisfied for small enough~$r$. 

By definition of $\sing_\alpha u$, some sequence of rescaled functions $u_k(x) = u(r_k x)$ converges in $W^{1,2}(\B_1)$ to a HCM $\varphi$ of homotopy type $\alpha$ for some sequence $r_k \to 0$. For large enough $k$, we have $\sing u_k \cap \B_1 \subseteq \B_{1/10} S(\varphi)$. Since the convergence is uniform away from $S(\varphi)$, $u_k$ restricted to $S(\varphi)^\perp \cap \partial \B_1$ has homotopy type $\alpha$, so condition \ref{df:flatness-Reifenberg} follows from Lemma \ref{lem:no-drop}. Rescaling, we see that $u$ is $\delta$-flat in $\B_{r_k}$ for large enough $k$. 
\end{proof}

\begin{rem}
\label{rem:unique-type}
Combining Lemma \ref{lem:some-flatness} with Corollary \ref{cor:top-stratum}, we see that \textit{some} can be changed to \textit{any} in the definition of $\sing_\alpha$, if only we restrict ourselves to points with energy density close to optimal. 
That is, if $y \in \sing_\alpha u$ and $\theta_u(y,0) < \Theta + \delta_1(\tfrac{1}{20})$, then every tangent map of $u$ at $y$ is a HCM of type $\alpha$.
\end{rem}

We are now ready to prove the main theorem. 

\begin{proof}[Proof of Theorem \ref{thm:Holder-regularity}]
Fix the H{\"o}lder exponent $0 < \gamma < 1$ and choose $\varepsilon = \varepsilon(\gamma,n) > 0$ according to Reifenberg's topological disc theorem (Theorem \ref{thm:Reifenberg}), then fix $\delta$ to be $\delta_2(\varepsilon)$ from Proposition \ref{prop:all-balls}. 

Choose a point $p \in \sing_\alpha u$ such that $\theta_u(p,0) < \Theta + \delta$. According to Lemma \ref{lem:some-flatness}, $u$ is $\delta_2(\varepsilon)$-flat in some ball $\B_{2r}(p)$. By Proposition \ref{prop:all-balls}, the set $\sing_\alpha u \cap \B_{r}(p)$ is closed and $\varepsilon$-flat in each ball $\B_s(z)$ centered at $z \in \sing_\alpha u \cap \B_{r}(p)$ with radius $0 < s < r$. Applying Theorem \ref{thm:Reifenberg}, we conclude that $\sing_\alpha u \cap \B_{r}(p)$ is bi-H{\"o}lder equivalent (with exponent $\gamma$) to an $(n-3)$-dimensional ball. 

By upper semicontinuity of $\theta_u(\cdot,0)$ we can ensure $\theta_u(y,0) < \Theta + \delta$ for all $y \in \B_{r}(p)$ (jsut by taking $r$ small enough), which together with Corollary \ref{cor:only-top} shows that the set in question forms an open subset of $\sing u$. 
\end{proof}

\section{Additional results}
\label{ch:additional}

In this subsection we discuss two elementary observations that give a better description of $\delta$-flatness, but were not needed for the proof of Theorem \ref{thm:Holder-regularity}. We fix an indecomposable homotopy type $\alpha \in \pi_2(\cN)$ and its lowest energy level $\Theta = \Theta(\alpha)$. 

\medskip

The following lemma shows that condition \ref{df:flatness-Reifenberg} in Definition \ref{df:flatness} can be dropped if one assumes a priori that $x \in \sing_\alpha u$. This gives us an equivalent condition for $\delta$-flatness. 

\begin{lem}
\label{lem:only-energy-assumption}
Assume that $0 \in \sing_\alpha u$. If $\delta \le \delta_1(\frac{1}{20})$ and $\theta_u(0,1) \le \Theta + \delta$, then $u$ is $\delta$-flat in $\B_1$. 
\end{lem}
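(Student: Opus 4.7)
The plan is to verify the two conditions of Definition~\ref{df:flatness} separately. Condition~\ref{df:flatness-energy} is essentially immediate: since $0 \in \sing_\alpha u$, some tangent map at $0$ is a HCM of type $\alpha$, so $\theta_u(0,0) \ge \Theta$; combined with $\theta_u(0,1) \le \Theta + \delta$ and monotonicity of $\theta_u(0,\cdot)$, this yields $\Theta \le \theta_u(0,0) \le \theta_u(0,r) \le \Theta + \delta$ for every $r \in (0,1]$. The substance of the lemma is condition~\ref{df:flatness-Reifenberg}, and I plan to obtain it by bootstrapping in scale. By Lemma~\ref{lem:some-flatness}, $u$ is $\delta$-flat in some small ball $\B_{r_0}$, and Corollary~\ref{cor:all-scales} extends this to every concentric sub-ball. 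Set
\[
R^\ast := \sup\{ r \in (0,1] : u \text{ is } \delta\text{-flat in } \B_r \} \in [r_0,1];
\]
it then suffices to show that the supremum is attained and that $R^\ast = 1$.

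Attainment at $R^\ast$ is a compactness step. For any sequence $r_k \nearrow R^\ast$, the planes $L_k$ witnessing $\delta$-flatness at scale $r_k$ live in the compact Grassmannian $G(n,n-3)$, so (up to subsequence) $L_k \to L_\infty$. Lemma~\ref{lem:fund} applied with $\varepsilon = \tfrac{1}{20}$ actually gives $\tfrac{1}{20}$-Reifenberg flatness at each $r_k$, which leaves enough slack so that both inclusions of Definition~\ref{df:set-flatness} pass to the closed limit at scale $R^\ast$ with the required constant $\tfrac{1}{10}$. The homotopy type on $L_\infty^\perp \cap \partial \B_{R^\ast}$ remains $\alpha$ because this test sphere stays at distance at least $\tfrac{19 R^\ast}{20}$ from $\sing u$, so $u$ is smooth in a neighbourhood, and a free homotopy through perpendicular test spheres connects it to the spheres $L_k^\perp \cap \partial \B_{r_k}$, which already carry type $\alpha$.

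To conclude $R^\ast = 1$, I argue by contradiction. If $R^\ast < 1$, then Lemma~\ref{lem:fund} at scale $R^\ast$ produces a HCM $\varphi$ of type $\alpha$ with $L := S(\varphi)$, satisfying $\|u - \varphi\|_{W^{1,2}(\B_{R^\ast})} \le \tfrac{1}{20}$ and $\sing u \cap \B_{R^\ast} \subseteq \B_{R^\ast/20}\, L$. For $\eta > 0$ small with $R^\ast(1+\eta) \le 1$, the plan is to verify that $u$ is still $\delta$-flat in $\B_{R^\ast(1+\eta)}$ with respect to the same $L$, contradicting maximality. Condition~\ref{df:flatness-energy} persists by the global monotonicity bound. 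The main obstacle, and the place where quantitative work is really needed, is to rule out singular points of $u$ in the annulus $\B_{R^\ast(1+\eta)} \setminus \B_{R^\ast}$ that lie outside the $\tfrac{R^\ast(1+\eta)}{10}$-tube around $L$. Here the key estimate is that the excess annular energy is small,
\[
\int_{\B_{R^\ast(1+\eta)}} |\nabla u|^2 \ - \ \int_{\B_{R^\ast(1+\eta)}} |\nabla \varphi|^2 \ \le \ \bigl(R^\ast(1+\eta)\bigr)^{n-2}\,\delta,
\]
since $\varphi$ has constant density $\Theta$. Combining this with smoothness of $\varphi$ off $L$ and an $\varepsilon$-regularity covering argument via~\eqref{eq:eps-regularity}, any singular point of $u$ in the annulus is forced into the prescribed tube around $L$, provided $\delta \le \delta_1(\tfrac{1}{20})$ and $\eta$ are small enough. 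The correct homotopy type on $L^\perp \cap \partial \B_{R^\ast(1+\eta)}$ then follows from the one at scale $R^\ast$, because the cylindrical piece $L^\perp \cap (\B_{R^\ast(1+\eta)} \setminus \B_{R^\ast})$ is disjoint from $\sing u$ and supplies a free homotopy.
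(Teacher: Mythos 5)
Your overall strategy---start from a small ball of $\delta$-flatness supplied by Lemma~\ref{lem:some-flatness} and propagate it to scale $1$---matches the paper, but the execution via a supremum $R^\ast$ and a quantitative annular excess estimate has genuine gaps that the paper's argument avoids.

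The paper's proof rests on a short observation you miss: in the compactness proof of Lemma~\ref{lem:fund}, condition~\ref{df:flatness-Reifenberg} of Definition~\ref{df:flatness} is used only (via Lemma~\ref{lem:no-drop}) to pin down the symmetry $\dim S(\varphi)=n-3$ and the homotopy type of the limit HCM. Hence a variant of Lemma~\ref{lem:fund} holds with condition~\ref{df:flatness-Reifenberg} imposed only in the half-ball $\B_{1/2}$, while condition~\ref{df:flatness-energy} (in $\B_1$) alone already forces $W^{1,2}(\B_1)$-convergence of the sequence to a homogeneous limit; the full conclusion in $\B_1$, including the tube confinement of $\sing u_k$ and the homotopy type on $L^\perp\cap\partial\B_1$, then follows from the compactness theorems and $\varepsilon$-regularity. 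This gives a clean doubling step: $\delta$-flat in $\B_r$ (plus the global energy hypothesis) implies $\delta$-flat in $\B_s$ for $r\le s\le\min(1,2r)$, and the lemma follows by iteration, with no need for a supremum or an attainment argument.

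Your extension step past $R^\ast$ is where the real gap lies. You bound the global excess $\int_{\B_{R^\ast(1+\eta)}}|\nabla u|^2-\int_{\B_{R^\ast(1+\eta)}}|\nabla\varphi|^2$ by $\bigl(R^\ast(1+\eta)\bigr)^{n-2}\delta$ and then invoke $\varepsilon$-regularity to push any singular point of $u$ in the annulus into the tube around $L$. But this bound does not localize: nothing prevents the excess from concentrating in a single small ball $\B_\rho(q)$ around a candidate singular point, since you control neither a local excess nor the sign of $\int_{\B_\rho(q)^c}(|\nabla u|^2-|\nabla\varphi|^2)$. Moreover, to apply \eqref{eq:eps-regularity} at $q$ in the annulus you need $\B_{2\rho}(q)\subseteq\B_1$, so $\rho\le\tfrac12\bigl(1-R^\ast(1+\eta)\bigr)$; as $R^\ast\to1$ this forces $\rho\to0$, and the competing inequality $\varepsilon_0\rho^{n-2}>C\delta$ fails for the fixed $\delta=\delta_1(\tfrac1{20})$. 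This is exactly the difficulty a compactness argument handles for free, since $W^{1,2}_{\loc}$-convergence yields uniform convergence (hence smoothness of $u_k$ for large $k$) on compact subsets of $\B_1\setminus\sing\varphi$.

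A secondary issue is the attainment claim at $R^\ast$: the Reifenberg flatness you inherit from scales $r<R^\ast$ constrains only $\sing u\cap\B_r$, hence $\sing u$ in the \emph{open} ball $\B_{R^\ast}$; the test sphere $L_\infty^\perp\cap\partial\B_{R^\ast}$ lies precisely on the boundary, and singular points on or just outside $\partial\B_{R^\ast}$ are not a priori excluded, so the asserted lower bound $\dist(L_\infty^\perp\cap\partial\B_{R^\ast},\sing u)\ge\tfrac{19R^\ast}{20}$ is not justified. In the paper's doubling argument this issue does not arise, because $\delta$-flatness at any $r<R^\ast$ already propagates to $\min(1,2r)>R^\ast$.
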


\begin{proof}
Inspecting the proof of Lemma \ref{lem:fund}, we see that condition \ref{df:flatness-Reifenberg} of Definition \ref{df:flatness} was only needed to ensure required symmetry of approximating homogenous minimizer $\varphi$. Hence it would be enough to assume condition \ref{df:flatness-Reifenberg} of Definition \ref{df:flatness} in a~smaller ball $\B_{1/2}$, and $\delta$-flatness in $\B_1$ follows as in Lemma \ref{lem:fund}. 

By Lemma \ref{lem:some-flatness}, there is $r > 0$ (possibly very small) such that $u$ is $\delta$-flat in $\B_r$. Applying the reasoning above, we see it is also $\delta$-flat in every ball $\B_s$ with $r \le s \le \min(1,2r)$. An iteration of this argument (as in Corollary \ref{cor:all-scales}, but in the opposite direction) leads to the claim. 
\end{proof}

\medskip

The last lemma gives a uniform bound (independent of $u$) for the rate of convergence $\theta_u(x,r) \to \theta_u(x,0)$ when $r \to 0$, assuming $\theta_u(x,r)$ is already close to $\theta_u(x,0)$. This assumption cannot be dropped, if only there exist tangent maps $\varphi \colon \R^n \to \cN$ with $\dim_H \sing \varphi = n-3$ which are not HCMs. 

An additional assumption is needed to ensure that the energy density is not greater than $\Theta$. This assumption is automatically satisfied if $\cN$ is real-analytic of integrable in the sense of \cite[Ch.~3.13]{Sim96}; see the remark preceding Corollary \ref{cor:discrete-levels}. 

\begin{lem}
\label{lem:rate-of-convergence}
Assume additionally that $\Theta$ is an isolated energy level for HCMs of type $\alpha$. Assume that $0 \in \sing_\alpha u$ and $\theta_u(0,1) \le \Theta + \delta_4$ with $\delta_3(n,\alpha,\cN) > 0$ sufficiently small. Then for every $\delta > 0$ there is $r(\delta,n,\cN) > 0$ such that $\theta_u(0,r) \le \Theta + \delta$ (in consequence, $u$ is $\delta$-flat in $\B_r$). 
\end{lem}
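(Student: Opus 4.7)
The plan is a two-stage compactness argument that ultimately produces a HCM of type $\alpha$ with energy in the forbidden gap predicted by the isolation hypothesis, contradicting it. Let $\delta_0>0$ be such that no HCM of type $\alpha$ has energy in $(\Theta,\Theta+\delta_0)$, and set $\delta_3:=\min\bigl(\delta_1(\tfrac{1}{20}),\,\delta_0\bigr)$.

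Suppose the conclusion fails. Then there exist $\delta\in(0,\delta_3)$ and sequences $u_k$, $r_k\downarrow 0$ with $0\in\sing_\alpha u_k$, $\theta_{u_k}(0,1)\le\Theta+\delta_3$, but $\theta_{u_k}(0,r_k)>\Theta+\delta$. By Lemma \ref{lem:only-energy-assumption} each $u_k$ is $\delta_3$-flat in $\B_1$, and Corollary \ref{cor:all-scales} propagates this to every concentric $\B_r$ with $0<r\le 1$. Rescaling $v_k(x):=u_k(r_k x)$, I obtain maps which are $\delta_3$-flat in every ball $\B_s$ with $s\le 1/r_k$; the scale-by-scale energy bound $\theta_{v_k}(0,R)\le\Theta+\delta_3$ combined with Luckhaus' compactness yields, along a subsequence, $v_k\to v$ strongly in $H^1_{\loc}(\R^n)$ with $v$ an energy minimizer.

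The technical heart of the argument -- and what I expect to be the main obstacle -- is passing $\delta_3$-flatness from $v_k$ to $v$ on every ball $\B_R$. The energy condition survives by strong $H^1$ convergence and upper semicontinuity of $\theta$. For the Reifenberg condition I extract a convergent subsequence of approximating planes $L_k^{(R)}\to L^{(R)}$ in the Grassmannian, use $\varepsilon$-regularity together with the inclusion $\sing v\subseteq\limsup_k\sing v_k$ for one direction, and re-apply Lemma \ref{lem:no-drop} to $v$ itself to recover the other; the homotopy type condition is preserved because the sphere $(L^{(R)})^\perp\cap\partial\B_R$ lies at distance $R$ from $\sing v\cap\B_R\subseteq\B_{R/10}L^{(R)}$, so $v_k\to v$ uniformly there. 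Once this is accomplished, strong convergence delivers $\theta_v(0,1)=\lim_k\theta_{u_k}(0,r_k)\ge\Theta+\delta$, while monotonicity and the global upper bound trap $\theta_v(0,R)$ inside $[\Theta,\Theta+\delta_3]$ for every $R>0$.

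Finally I blow $v$ down: set $w_j(x):=v(R_j x)$ with $R_j\to\infty$. Each $w_j$ inherits $\delta_3$-flatness in every fixed $\B_s$ once $j$ is large enough, so a further diagonal extraction yields a limit $w$ which is $\delta_3$-flat in $\B_1$ and satisfies
\[
\theta_w(0,s)=\lim_j\theta_v(0,sR_j)=\Theta_\infty\quad\text{for every }s>0,
\]
where $\Theta_\infty:=\lim_{R\to\infty}\theta_v(0,R)\in[\Theta+\delta,\Theta+\delta_3]$. Constancy of $\theta_w(0,\cdot)$ forces $w$ to be homogeneous via the monotonicity formula, and Corollary \ref{cor:top-stratum}, applied to $w$ (whose only tangent map at the origin is $w$ itself), identifies $w$ as a HCM of type $\alpha$ with energy $\Theta_\infty\in(\Theta,\Theta+\delta_0)$. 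This contradicts the isolation hypothesis on $\Theta$.
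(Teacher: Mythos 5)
Your proof is correct, but it takes a longer route than the paper's. The key structural difference is where the contradicting HCM is produced. The paper passes directly to a subsequential limit $u_k\to u$ (using $r_k=1/k$) and observes that, since $\theta_{u_k}(0,1/k)\ge\Theta+\delta$ and the density ratio is monotone, in fact $\theta_u(0,r)\ge\Theta+\delta$ for \emph{every} fixed $r>0$, whence $\theta_u(0,0)\in[\Theta+\delta,\Theta+\delta_3]$. Combined with Corollary~\ref{cor:top-stratum} (which gives $0\in\sing_\alpha u$), the tangent map of $u$ at the origin is already a HCM of type $\alpha$ with energy $\theta_u(0,0)$ in the forbidden interval $(\Theta,\Theta+\delta_3]$, and the proof ends there. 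You instead pre-rescale to $v_k(x)=u_k(r_kx)$, which destroys the information at small scales: you only control $\theta_v(0,1)\ge\Theta+\delta$, not $\theta_v(0,0)$, and so you are forced into a second blow-down stage at infinity to produce a homogeneous $w$ with constant density $\Theta_\infty$. That second compactness step is avoidable: since the obstruction lies in a punctured neighbourhood of the origin, the tangent map of the single limit $u$ at $0$ carries all the contradiction you need. Both arguments do have to address the same ``technical heart'' you flag -- propagating $\delta$-flatness to the limit (a convergent subsequence of reference planes, $\sing v\subseteq\limsup_k\sing v_k$ from $\varepsilon$-regularity and upper semicontinuity of $\theta$, uniform convergence away from the singular set to preserve the homotopy type, and Lemma~\ref{lem:no-drop} for the reverse Reifenberg inclusion) -- so that is not extra cost of your approach; the genuine overhead is the $v_k$ rescaling and the tangent-map-at-infinity step, neither of which is needed.
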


\begin{proof}
We choose $\delta_3 > 0$ smaller than $\delta_1(\frac{1}{20})$ from Lemma \ref{lem:fund} and such that 
\[
\int_{\B_1^n} |\nabla \varphi|^2 \notin (\Theta,\Theta+\delta_3]
\]
for each HCM $\varphi$ of type $\alpha$. 

For the sake of contradiction, assume there is a sequence of such energy minimizing maps $u_k$ with 
\[
\Theta + \delta \le \theta_{u_k}(0,1/k) \le \theta_{u_k}(0,1) \le \Theta + \delta_3.
\]
Taking a subsequence, we obtain a limit map $u$ such that 
\[
\Theta + \delta \le \theta_u(0,0) \le \theta_{u}(0,1) \le \Theta + \delta_3.
\]
It follows from Lemma \ref{lem:only-energy-assumption} that each $u_k$ is $\delta_1(\frac{1}{20})$-flat in $\B_1$, hence so is $u$ and by Corollary \ref{cor:top-stratum} we infer $0 \in \sing_\alpha u$. In particular the energy density $\theta_u(0,0)$ is either $\Theta$ or greater than $\Theta+\delta_3$, a contradiction.  
\end{proof}

\section*{Acknowledgments}

The author would like to thank Maciej Borodzik for fruitful discussions and helpful suggestions. 

\bibliography{harmonic-maps}

\begin{thebibliography}{10}

\bibitem{BreCorLie86}
{\sc Brezis, H., Coron, J.-M., and Lieb, E.~H.}
\newblock Harmonic maps with defects.
\newblock {\em Comm. Math. Phys. 107}, 4 (1986), 649--705.

\bibitem{HarLin90}
{\sc Hardt, R., and Lin, F.-H.}
\newblock The singular set of an energy minimizing map from {$B^4$} to {$S^2$}.
\newblock {\em Manuscripta Math. 69}, 3 (1990), 275--289.

\bibitem{LinWan06}
{\sc Lin, F.~H., and Wang, C.~Y.}
\newblock Stable stationary harmonic maps to spheres.
\newblock {\em Acta Math. Sin. (Engl. Ser.) 22}, 2 (2006), 319--330.

\bibitem{Luc88}
{\sc Luckhaus, S.}
\newblock Partial {H}\"older continuity for minima of certain energies among
  maps into a {R}iemannian manifold.
\newblock {\em Indiana Univ. Math. J. 37}, 2 (1988), 349--367.

\bibitem{Mat95}
{\sc Mattila, P.}
\newblock {\em Geometry of sets and measures in {E}uclidean spaces}, vol.~44 of
  {\em Cambridge Studies in Advanced Mathematics}.
\newblock Cambridge University Press, Cambridge, 1995.
\newblock Fractals and rectifiability.

\bibitem{NabVal17}
{\sc Naber, A., and Valtorta, D.}
\newblock Rectifiable-{R}eifenberg and the regularity of stationary and
  minimizing harmonic maps.
\newblock {\em Ann. of Math. (2) 185}, 1 (2017), 131--227.

\bibitem{Nak06}
{\sc Nakajima, T.}
\newblock Singular points of harmonic maps from 4-dimensional domains into
  3-spheres.
\newblock {\em Duke Math. J. 132}, 3 (2006), 531--543.

\bibitem{Rei60}
{\sc Reifenberg, E.~R.}
\newblock Solution of the {P}lateau {P}roblem for {$m$}-dimensional surfaces of
  varying topological type.
\newblock {\em Acta Math. 104\/} (1960), 1--92.

\bibitem{SacUhl81}
{\sc Sacks, J., and Uhlenbeck, K.}
\newblock The existence of minimal immersions of {$2$}-spheres.
\newblock {\em Ann. of Math. (2) 113}, 1 (1981), 1--24.

\bibitem{SchUhl82}
{\sc Schoen, R., and Uhlenbeck, K.}
\newblock A regularity theory for harmonic maps.
\newblock {\em J. Differential Geom. 17}, 2 (1982), 307--335.

\bibitem{SchUhl83cor}
{\sc Schoen, R., and Uhlenbeck, K.}
\newblock Correction to: ``{A} regularity theory for harmonic maps''.
\newblock {\em J. Differential Geom. 18}, 2 (1983), 329.

\bibitem{Sim-notes}
{\sc Simon, L.}
\newblock Reifenberg's {T}opological {D}isc {T}heorem.
\newblock {\em Available at
  \url{http://www.math.uni-tuebingen.de/ab/analysis/pub/leon/reifenberg/reifenberg.html}\/}.

\bibitem{Sim95}
{\sc Simon, L.}
\newblock Rectifiability of the singular set of energy minimizing maps.
\newblock {\em Calc. Var. Partial Differential Equations 3}, 1 (1995), 1--65.

\bibitem{Sim96}
{\sc Simon, L.}
\newblock {\em Theorems on regularity and singularity of energy minimizing
  maps}.
\newblock Lectures in Mathematics ETH Z\"urich. Birkh\"auser Verlag, Basel,
  1996.
\newblock Based on lecture notes by Norbert Hungerb\"uhler.

\bibitem{Str85}
{\sc Struwe, M.}
\newblock On the evolution of harmonic mappings of {R}iemannian surfaces.
\newblock {\em Comment. Math. Helv. 60}, 4 (1985), 558--581.

\end{thebibliography}
\bibliographystyle{acm}

\end{document}